\DeclareSymbolFontAlphabet{\mathbb}{AMSb}
\DeclareSymbolFontAlphabet{\mathbbol}{bbold}
\theoremstyle{plain}
\newtheorem{theorem}{\normalfont\scshape Theorem}[section]
\newtheorem{proposition}{\normalfont\scshape Proposition}[section]
\newtheorem{lemma}[proposition]{\normalfont\scshape Lemma}
\newtheorem{corollary}[theorem]{\normalfont\scshape Corollary}
\newtheorem*{corollary*}{\normalfont\scshape Corollary}
\theoremstyle{remark}
\newtheorem*{remark*}{\normalfont\scshape Remark}
\numberwithin{equation}{section}
\renewcommand{\footnoterule}{
  \kern -3pt
  \hrule width 2.5in height 0.4pt
  \kern 3pt
}
\begin{document}
	
\title[ On moments of the error term of the multivariable k-th divisor functions ]
	  { On moments of the error term of the multivariable k-th divisor functions }

\author[Zhen Guo]{Zhen Guo}

\address{Department of Mathematics, China University of Mining and Technology,
         Beijing, 100083, People's Republic of China}

\email{zhen.guo.math@gmail.com}

\author[Xin Li]{Xin Li}

\address{Department of Mathematics, China University of Mining and Technology,
         Beijing, 100083, People's Republic of China}

\email{lixin\_alice@foxmail.com}

\date{}

\footnotetext[1]{Zhen Guo is the corresponding author. \\
\quad\,\,
{\textbf{Keywords}}: Divisor function; Moment; Dirichlet series. \\

\quad\,\,
{\textbf{MR(2020) Subject Classification}}: 11N37,11N64

}

\begin{abstract}
Suppose $k\geqslant3$ is an integer. Let $\tau_k(n)$ be the number of ways $n$ can be written as a product of $k$ fixed factors. For any fixed integer $r\geqslant2$, we have the asymptotic formula
\begin{equation*}
 \sum_{n_1,\cdots,n_r\leqslant x}\tau_k(n_1 \cdots n_r)=x^r\sum_{\ell=0}^{r(k-1)}d_{r,k,\ell}(\log x)^{\ell}+O(x^{r-1+\alpha_k+\varepsilon}),
\end{equation*}
where $d_{r,k,\ell}$ and $0<\alpha_k<1$ are computable constants. In this paper we study the mean square of $\Delta_{r,k}(x)$ and give upper bounds for $k\geqslant4$ and an asymptotic formula for the mean square of $\Delta_{r,3}(x)$. We also get an upper bound for the third power moment of $\Delta_{r,3}(x)$. Moreover, we study the first power moment of $\Delta_{r,3}(x)$ and then give a result for the sign changes of it.
\end{abstract}

\maketitle

\section{Introduction and Results}
Let $k\geqslant2$ be an integer, $\tau_k(n)$ denote the number of ways $n$ can be written as a product of $k$ fixed factors. When $k=2$, $\tau_2(n)=\tau(n)$ is the Dirichlet divisor function. The problems about it are important in analytic number theory and hence have a long history \cite{MR0792089,MR0998378,MR2718848}.

For $k\geqslant3$, suppose $x$ is a large real positive number, we have 
\begin{equation}\label{k div aver}
  \sum_{n\leqslant x}\tau_k(n)=xP_{k-1}(\log x)+\Delta_k(x):=M_k(x)+\Delta_k(x),
\end{equation}
where $P_{k-1}(t)$ is a given polynomial in $t$ of degree $k-1$ and $\Delta_k(x)$ is the error term. We denote
\begin{equation}\label{def alpha beta}
  \alpha_k=\inf\left\{a:\Delta_k(x)=O(x^{a})\right\},\qquad\beta_k=\inf\left\{b:\int_{1}^{x}{\Delta_k}^2(t)dt=O(x^{1+2b})\right\}.
\end{equation}

There are many results about the upper bounds and lower bounds for $\alpha_k$ and $\beta_k$. For unified conclusions on $k$, Voronoi \cite{MR1580627} proved that $\alpha_k\leqslant (k-1)/(k+1)$ for $k\geqslant3$ in 1903. In 1916 Hardy \cite{MR1576550} showed that $\alpha_k\geqslant\beta_k\geqslant (k-1)/2k$ holds for $k\geqslant3$. Hardy and Littlewood \cite{MR1575368} proved that $\alpha_k\leqslant (k-1)/(k+2)$ for $k\geqslant4$ in 1923. Ivi\'{c} \cite{MR0792089} gave a summary, namely
\begin{equation}\label{alphak}
\begin{aligned}
  &\alpha_3\leqslant43/96,\qquad\alpha_k\leqslant(3k-4)/4k\qquad(4\leqslant k\leqslant8),\cdots
\end{aligned}
\end{equation}
and
\begin{equation}\label{betak}
\begin{aligned}
  \beta_k=(k-1)/2k, \qquad(k=2,3,4),\qquad \beta_5\leqslant119/260,\qquad \beta_6\leqslant1/2,\qquad \beta_7\leqslant39/70,\cdots
\end{aligned}
\end{equation}

For the results on $k=3$, in 1956, Tong \cite{MR0098718} developed a new method of deriving an asymptotic formula for the mean square for $\Delta_3(x)$, which can be stated as follows:

Suppose $T$ is a large real number, $\varepsilon$ is a sufficiently small real positive number and $\Delta_3(x)$ is defined in (\ref{k div aver}), then
\begin{equation}\label{Tong}
  \int_{1}^{T}{\Delta_3}^2(x)dx=\frac{1}{10\pi^2}\sum_{n=1}^{\infty}\frac{{\tau_3}^2(n)}{n^{4/3}}T^{5/3}+O(T^{5/3-1/14+\varepsilon}).
\end{equation}

Let $r\geqslant2$ be a fixed integer. In this paper we consider the sum
\begin{equation}\label{Delta_r,k def}
  \sum_{n_1,\cdots,n_r\leqslant x}\tau_k(n_1\cdots n_r)=M_{r,k}(x)+\Delta_{r,k}(x),
\end{equation}
where $M_{r,k}(x)$ is the main term and $\Delta_{r,k}(x)$ is the error term. T\'{o}th and Zhai \cite{MR3841555} studied the condition $k=2$. In this paper we first show that

\begin{theorem}\label{U B for M T}
Let $r\geqslant2$, $k\geqslant3$ be fixed integers. Suppose $\alpha_k$ are expressed in (\ref{alphak}), then for any real number $x\geqslant2$, the asymptotic formula
\begin{equation*}
  \sum_{n_1,\cdots,n_r\leqslant x}\tau_k(n_1\cdots n_r)=M_{r,k}(x)+O(x^{r-1+\alpha_k+\varepsilon})
\end{equation*}
holds for any $\varepsilon>0$, and $M_{r,k}(x)$ is expressed by
\begin{equation*}
  M_{r,k}(x)=x^r\sum_{\ell=0}^{r(k-1)}d_{r,k,\ell}(\log x)^{\ell},
\end{equation*}
where $d_{r,k,\ell}$ $(0\leqslant\ell\leqslant r(k-1))$ are computable constants.
\end{theorem}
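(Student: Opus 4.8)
The plan is to reduce the $r$-fold sum to the classical one-variable divisor problem by peeling off one variable at a time, using a uniform twisted divisor estimate as the analytic engine. I would first record the structure of the generating Dirichlet series $Z(s_1,\dots,s_r)=\sum_{n_1,\dots,n_r\ge 1}\tau_k(n_1\cdots n_r)\,n_1^{-s_1}\cdots n_r^{-s_r}$. Since $\tau_k$ is multiplicative with $\tau_k(p^a)=\binom{a+k-1}{k-1}$, this has an Euler product, and a direct coefficient comparison at each prime shows the clean factorization $Z(s_1,\dots,s_r)=\prod_{i=1}^{r}\zeta(s_i)^k\cdot H(s_1,\dots,s_r)$, where the single-variable contributions cancel exactly and $H$ is given by an Euler product that converges absolutely in the region $\min_i\Re s_i>1/2$ (its first correction factor at $p$ is $1-\tfrac{k(k-1)}2\sum_{i<j}p^{-s_i-s_j}+\cdots$). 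This factorization predicts that the main term is the $r$-fold residue of $Z(\vec s)\prod_i x^{s_i}/s_i$ at $\vec s=\vec 1$: each $\zeta(s_i)^k$ has a pole of order $k$, the residue produces up to $k-1$ powers of $\log x$ per variable against the factor $x^{\sum s_i}=x^r$, and hence $M_{r,k}(x)=x^r\sum_{\ell=0}^{r(k-1)}d_{r,k,\ell}(\log x)^\ell$ with computable $d_{r,k,\ell}$ coming from $H$ and the Laurent data of $\zeta$.

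For the sharp error term I would work with the one-variable twisted series. For fixed $m$ one has $\sum_{n\ge 1}\tau_k(mn)\,n^{-s}=\zeta(s)^k g_m(s)$, where $g_m(s)=\prod_{p^b\parallel m}R_p(s)$ is a \emph{finite} Euler product; writing $x=p^{-s}$ one finds $R_p(s)=(1-x)^k\sum_{A\ge b}\binom{A+k-1}{k-1}x^{A-b}$, which is holomorphic for $\Re s>0$. The elementary inequality $\binom{a+b+k-1}{k-1}\le(1+a)^{k-1}\binom{b+k-1}{k-1}$ yields $|R_p(s)|\ll_k\tau_k(p^b)$ on any half-plane $\Re s\ge\sigma_0>0$, whence $g_m(s)\ll_\varepsilon m^\varepsilon$ there (and the same for its derivatives, by Cauchy's estimates). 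Reading off Dirichlet coefficients gives $\tau_k(mn)=\sum_{d\mid n}c_m(d)\tau_k(n/d)$ with $\sum_d|c_m(d)|d^{-\sigma}\ll_\sigma m^\varepsilon$ for every fixed $\sigma>0$, so that $\sum_{n\le x}\tau_k(mn)=\sum_{d\le x}c_m(d)\bigl(M_k(x/d)+\Delta_k(x/d)\bigr)$. Using $\Delta_k(y)\ll y^{\alpha_k+\varepsilon}$ (together with $\alpha_k>0$, which is all the convergence one needs here) the $\Delta_k$-part and the tail of the $M_k$-part are each $\ll m^\varepsilon x^{\alpha_k+\varepsilon}$, while the completed $M_k$-part gives $x\,\tilde P_m(\log x)$ for a polynomial $\tilde P_m$ of degree $k-1$ with coefficients $\ll m^\varepsilon$. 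This is the key uniform statement $\sum_{n\le x}\tau_k(mn)=x\,\tilde P_m(\log x)+O\!\left(m^\varepsilon x^{\alpha_k+\varepsilon}\right)$.

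Taking $m=n_2\cdots n_r$ and summing over $n_2,\dots,n_r\le x$, the error contributes $\ll x^{\alpha_k+\varepsilon}\sum_{n_2,\dots,n_r\le x}(n_2\cdots n_r)^\varepsilon\ll x^{r-1+\alpha_k+\varepsilon}$, which is exactly the claimed bound: the single peeled variable carries the genuine divisor exponent $\alpha_k$ while each of the remaining $r-1$ variables costs only a factor $x^{1+\varepsilon}$. The main part $x\sum_{n_2,\dots,n_r\le x}\tilde P_{n_2\cdots n_r}(\log x)$ must then be identified with the residue expression $M_{r,k}(x)$ above. Here I would invoke the uniqueness principle that any two representations of $\sum_{\vec n\le x}\tau_k(n_1\cdots n_r)$ of the form (polynomial in $\log x$ times $x^r$) plus an error $o(x^r)$ necessarily share the same polynomial: the residue computation supplies one such representation (with a crude contour error $O(x^{r-\eta})$, $\eta>0$, obtained by shifting each $s_i$-contour a little past $1/2$, which is comfortably within the domain of $H$), and the peeling supplies the sharp error, so the two determine the main term and the optimal remainder simultaneously. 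The coefficients $d_{r,k,\ell}$ are then read off from the residue, confirming degree $r(k-1)$.

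The main obstacle I anticipate is precisely the \emph{uniformity in $m$ with the sharp exponent} $\alpha_k$ rather than $1/2$. The globally convergent factor $H(\vec s)$ is attractive and immediately yields the main-term shape, but it converges only for $\min_i\Re s_i>1/2$; feeding the one-variable bound $\Delta_k(x/d_1)\ll(x/d_1)^{\alpha_k+\varepsilon}$ through it would require convergence at $\Re s_1=\alpha_k$, which fails for $k=3$ since $\alpha_3\le 43/96<1/2$ and would only give the weaker remainder $O(x^{r-1/2})$. The finite Euler product $g_m$, holomorphic for $\Re s>0$ and bounded by $m^\varepsilon$, is exactly what rescues the sharp exponent, at the negligible cost of the factor $m^\varepsilon$ that is absorbed when summing over the easy variables. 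The remaining, more bookkeeping-heavy point is the clean assembly and identification of the iterated main terms into a single polynomial of the stated degree $r(k-1)$ with well-defined constants $d_{r,k,\ell}$, for which the structural factorization $Z=\prod_i\zeta(s_i)^k\cdot H$ is the decisive input.
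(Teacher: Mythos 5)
Your analytic engine for the error term is sound, and it genuinely differs from the paper's route: where you peel one variable and control the twisted sum $\sum_{n\leqslant x}\tau_k(mn)$ through the finite Euler product $g_m(s)$, holomorphic for $\Re s>0$ with $g_m(s)\ll m^{\varepsilon}$, the paper expands all $r$ variables simultaneously via the convolution $\tau_k(n_1\cdots n_r)=\sum_{n_j=m_jd_j}f_{r,k}(m_1,\cdots,m_r)\tau_k(d_1)\cdots\tau_k(d_r)$; the role of your $g_m$ is played there by the combinatorial identity of Lemma \ref{polynomial}, which shows $F_{r,k}$ contains no single-variable terms and hence converges in the asymmetric region $\Re s_j>0$, $\Re(s_j+s_l)>1$ --- exactly what permits pairing one factor $m_r^{-\alpha_k}$ against factors $m_j^{-1}$. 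You correctly diagnosed that convergence only for $\min_i\Re s_i>1/2$ caps the error at $x^{r-1/2}$, and your $g_m$ device does rescue the exponent $\alpha_k$ on the error side (one small caveat: the coefficient bound $\sum_d|c_m(d)|d^{-\sigma}\ll m^{\varepsilon}$ does not follow formally from the bound on the function $g_m$, but it does follow from your binomial inequality applied coefficientwise, so this is only a gap in the writing).

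The genuine gap is the identification of the main term. After peeling you are left with $x\sum_{n_2,\cdots,n_r\leqslant x}\tilde P_{n_2\cdots n_r}(\log x)$, which is \emph{not} of the form $x^r$ times a polynomial in $\log x$: its coefficients are themselves $(r-1)$-fold sums $A_j(x)=\sum_{n_2,\cdots,n_r\leqslant x}a_j(n_2\cdots n_r)$ depending on $x$. Your uniqueness principle cannot repair this with the stated inputs. From the contour representation $S(x)=x^rQ(\log x)+O(x^{r-\eta})$ and the peeling representation $S(x)=M(x)+O(x^{r-1+\alpha_k+\varepsilon})$ one can only conclude $M(x)=x^rQ(\log x)+O(x^{r-\eta})$ and therefore $S(x)=x^rQ(\log x)+O(x^{r-\eta})$: the final remainder is the \emph{worse} of the two. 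Since $H$ converges only for $\Re s_i>1/2$ and $\zeta^k$ grows on that line, any attainable $\eta$ from a plain contour shift is small, and in particular $\eta<1-\alpha_k$ (note $1-\alpha_3\geqslant 53/96>1/2$), so your argument delivers $O(x^{r-\eta})$, not the claimed $O(x^{r-1+\alpha_k+\varepsilon})$. To close the gap you would have to evaluate each $A_j(x)$ honestly with error $O(x^{r-2+\alpha_k+\varepsilon})$, e.g.\ by induction on $r$ over a wider class of multiplicative weights (the $a_j$ are derivative data of your finite Euler products, with $a_j(p)=k+O(1/p)$, so each step reduces to a $\tau_k$-type problem again) --- or adopt the paper's one-step expansion, in which the main term is $\sum_{m_1,\cdots,m_r\leqslant x}f_{r,k}(\vec m)\prod_j M_k(x/m_j)$ completed to infinity with tail $O(x^{r-1+\varepsilon})$, the terms with exactly one $\Delta_k$ give $O(x^{r-1+\alpha_k+\varepsilon})$, and the terms with two or more $\Delta_k$ give $O(x^{r-1+\varepsilon})$, all resting on the convergence of the $f_{r,k}$-series in the asymmetric region.
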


 The first author \cite{GZ} have studied the mean square for $\Delta_{r,2}(x)$ and have got an asymptotic formula. In this paper we concentrate on the integral
\begin{equation*}
  \int_{1}^{T}{\Delta_{r,k}}^2(x)dx
\end{equation*}
for $k\geqslant3$ and large real $T$. We give an asymptotic formula for $k=3$ and the upper bounds for $k\geqslant4$. The results are stated as follows.
\begin{theorem}\label{mean square 3}
Let $T\geqslant2$ be a large real number and $r\geqslant2$ be a fixed integer. Then the asymptotic formula
\begin{equation*}
  \int_{1}^{T}{\Delta_{r,3}}^2(x)dx=\frac{r^2}{6\pi^2}T^{2r-1/3}L_{4r-4}(\log T)+O(T^{2r-7/18+\varepsilon})
\end{equation*}
holds for any $\varepsilon>0$, where $L_{4r-4}(u)$ is a polynomial in $u$ of degree $(4r-4)$ denoted by
\begin{equation*}
\begin{aligned}
  L_{4r-4}(u)=\sum_{\ell_1,\ell_2=0}^{2(r-1)}D_{r,3,\ell_1,\ell_2}\sum_{t=0}^{\ell_1+\ell_2}\frac{(-1)^t(\ell_1+\ell_2)!}{(2r-\frac{1}{3})^{t+1}(\ell_1+\ell_2-t)!}u^{\ell_1+\ell_2-t},
\end{aligned}
\end{equation*}
and $D_{r,3,\ell_1,\ell_2}$ $(0\leqslant\ell_1,\ell_2\leqslant2(r-1))$ are computable constants.
\end{theorem}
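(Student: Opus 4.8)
The plan is to reduce the $r$-dimensional problem to the classical one-dimensional error term $\Delta_3(x)$ and then to run a Tong-type mean-square argument in the presence of an explicit weight $x^{2(r-1)}(\log x)^{\bullet}$. First I would encode the left-hand side of \eqref{Delta_r,k def} through the $r$-fold Dirichlet series $F(s_1,\dots,s_r)=\sum_{n_1,\dots,n_r}\tau_3(n_1\cdots n_r)n_1^{-s_1}\cdots n_r^{-s_r}$, whose Euler product I would factor as $F(s_1,\dots,s_r)=\prod_{i=1}^r\zeta(s_i)^3\cdot U(s_1,\dots,s_r)$, with $U$ an Euler product converging absolutely in a region reaching to the left of the lines $\Re s_i=1$ (e.g.\ for $\min_{i\neq j}\Re(s_i+s_j)>1$). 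The exponent $3$ is forced, since the local factor $\sum_{a_1,\dots,a_r\geqslant0}\binom{a_1+\cdots+a_r+2}{2}\prod_i p^{-a_is_i}$ has a pole of order $3$ in each variable as $s_i\to1$; this is exactly what produces the degree $r(k-1)=2r$ of the main polynomial in Theorem \ref{U B for M T}. Applying $r$-dimensional Perron summation and moving the contours, the residues at the triple poles $s_i=1$ reproduce $M_{r,3}(x)$.

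The central step is a reduction lemma asserting that the mean-square-dominant part of $\Delta_{r,3}(x)$ is
\[
  \Delta_{r,3}(x)=r\,x^{\,r-1}\,\mathcal Q_{r}(\log x)\,\Delta_3(x)+E_r(x),
\]
where $\mathcal Q_r(u)=\sum_{\ell=0}^{2(r-1)}c_{r,\ell}u^{\ell}$ is an explicit polynomial of degree $2(r-1)$ and $E_r$ is negligible both pointwise and in mean square. The mechanism is that, because the summation bound $x$ is common to all variables, each of the $r$ choices of a single ``oscillating'' variable produces the \emph{same} Voronoi oscillation $\sum_n\tau_3(n)n^{-2/3}\cos(6\pi(nx)^{1/3}+\cdots)$ in the same argument $x$, so the $r$ contributions add coherently and yield the factor $r$; the remaining $r-1$ variables are integrated out by residues at their triple poles $s_i=1$, each contributing a factor $x(\log x)^{\leqslant2}$, which accounts for $x^{r-1}$ and for $\deg\mathcal Q_r=2(r-1)$. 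To make this rigorous I would insert a truncated Voronoi formula for $\Delta_3$ with cutoff $N$ and track the values of $U$ at the residual points, absorbing them into the constants $c_{r,\ell}$.

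Granting the lemma, I would square and integrate. Writing $\Delta_3(x)=x^{1/3}\psi(x)$ with $\psi(x)=\frac{1}{2\pi\sqrt3}\sum_n\tau_3(n)n^{-2/3}\cos(6\pi(nx)^{1/3}+\cdots)$, the leading contribution is $r^2\int_1^T x^{2r-4/3}\mathcal Q_r(\log x)^2\psi(x)^2\,dx$. Expanding $\mathcal Q_r^2=\sum_{\ell_1,\ell_2}c_{r,\ell_1}c_{r,\ell_2}(\log x)^{\ell_1+\ell_2}$, the diagonal $m=n$ terms of $\psi^2$ give its mean value $\frac{1}{6\pi^2}\sum_n\tau_3^2(n)n^{-4/3}$ (consistent with Tong's constant $\frac{1}{10\pi^2}$ in \eqref{Tong} after integrating $x^{2/3}$), so each summand reduces to $\int_1^T x^{2r-4/3}(\log x)^{\ell_1+\ell_2}dx$. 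The elementary identity $\int_1^T x^{a-1}(\log x)^{L}dx=\frac{T^a}{a}\sum_{t=0}^{L}\frac{(-1)^tL!}{a^t(L-t)!}(\log T)^{L-t}+O(1)$ with $a=2r-\frac13$ and $L=\ell_1+\ell_2$ produces exactly $L_{4r-4}(\log T)$ and the prefactor $\frac{r^2}{6\pi^2}$, where $D_{r,3,\ell_1,\ell_2}=c_{r,\ell_1}c_{r,\ell_2}\sum_n\tau_3^2(n)n^{-4/3}$; specialising to $r=1$ recovers Tong's formula, a useful consistency check.

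The hard part is the error term, where two sources must both be held below $T^{2r-1/3}$: the off-diagonal $m\neq n$ terms of $\psi^2$, whose phase difference $6\pi((mx)^{1/3}-(nx)^{1/3})$ must be exploited via first-derivative/exponent-pair estimates to gain the saving $\tfrac1{18}$, and the contribution of $E_r(x)$, which is governed by the truncation level $N$ in the Voronoi formula. I would choose $N$ to balance the tail of the series against the off-diagonal bound, expecting the optimum to give $O(T^{2r-7/18+\varepsilon})$. The main technical obstacle is to verify that the weight $x^{2(r-1)}(\log x)^{\bullet}$ does not erode the exponent beyond the one-variable analysis, uniformly in the polynomial degree $2(r-1)$, and that the error $E_r$ from the reduction is genuinely of lower order in mean square.
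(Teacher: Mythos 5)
Your high-level architecture (factor the $r$-fold Dirichlet series as $\zeta^3(s_1)\cdots\zeta^3(s_r)$ times an absolutely convergent $U$, isolate the single-oscillation part with the symmetry factor $r$, insert a truncated Voronoi expansion, separate diagonal from off-diagonal, balance $N$) matches the paper, but your central reduction lemma is false, and the failure is not a technicality: it changes the main-term constants. After the factorization, the mean-square-dominant part of $\Delta_{r,3}(x)$ is the \emph{convolution of dilates}
\begin{equation*}
{\Delta_{r,3}}^{*}(x)=r\sum_{m_1,\cdots,m_r\leqslant x}f_{r,3}(m_1,\cdots,m_r)\prod_{j=1}^{r-1}M_3\!\left(\frac{x}{m_j}\right)\Delta_3\!\left(\frac{x}{m_r}\right),
\end{equation*}
and the $m_r$-sum cannot be collapsed to $r\,x^{r-1}\mathcal{Q}_r(\log x)\Delta_3(x)+E_r(x)$ with $E_r$ negligible in mean square. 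Each dilate with $m_r=m$ fixed contributes $\int_T^{2T}\Delta_3^2(x/m)\,dx\asymp T^{5/3}m^{-2/3}$, which is of the \emph{same} order $T^{5/3}$ as the $m=1$ term; worse, distinct dilates correlate, because the phases $6\pi(n_1x/m_r)^{1/3}$ and $6\pi(n_2x/m_{2r})^{1/3}$ coincide identically in $x$ whenever $n_1m_{2r}=n_2m_r$ (e.g.\ $\Delta_3(x)$ and $\Delta_3(x/8)$ correlate through $n_2=8n_1$). So the true diagonal in the squared sum is the hyperbolic condition $n_1m_{2r}=n_2m_r$, coupling the $f_{r,3}$-variables to the Voronoi frequencies, and the constants of the theorem are the coupled singular series $D_{r,3,\ell_1,\ell_2}=T_{g,r,3}\left(\frac{1}{3},\frac{2}{3}\right)$ of Lemma \ref{T(x,y)}, restricted by $\frac{m_r}{m_{2r}}=\frac{n_1}{n_2}$. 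They do \emph{not} factor as your $c_{r,\ell_1}c_{r,\ell_2}\sum_n\tau_3^2(n)n^{-4/3}$; that product form would be correct only if all terms with $m_r>1$ were negligible, which they are not (note $f_{r,3}$ vanishes only when a \emph{single} coordinate exceeds $1$, by Lemma \ref{polynomial}, so terms like $f_{r,3}(p,1,\cdots,1,p)\neq0$ survive). Your $E_r(x)$ thus has mean square of full order $T^{2r-1/3}$, and your claimed value of $D_{r,3,\ell_1,\ell_2}$ is wrong even though the overall shape ($\frac{r^2}{6\pi^2}T^{2r-1/3}$ times a degree-$(4r-4)$ log-polynomial via the elementary $\int_1^T x^{a-1}(\log x)^L dx$ identity, with error $T^{2r-7/18+\varepsilon}$) is the right target.

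What is missing, concretely, is the machinery for the coupled diagonal and for the tails of the convolution. The paper keeps the full $m$-sum, splits ${\Delta_{r,3}}^{*}$ with a parameter $y$ into $M_1$ (all $m_j\leqslant y$) plus tail pieces $M_2,M_3$ (handled by Cauchy--Schwarz and Tong's bound, costing $T^{2r-1/3+\varepsilon}y^{-2/3}$), inserts the truncated expansion of Lemma \ref{delta32 mean square} for each $\Delta_3(x/m_r)$, evaluates the coupled diagonal by the convergence-plus-truncation statement of Lemma \ref{T(x,y)}, and controls the off-diagonal by the first-derivative test together with Lemma \ref{T(a,b)} for $\sum 1/|n_1m_{2r}-n_2m_r|$, finally taking $y=N^{1/2}$, $N=T^{1/3}$. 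Your off-diagonal plan and balancing heuristic coincide with this last stage, but without replacing the false collapse by the genuine multi-variable diagonal analysis, the proof does not go through.
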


Theorem \ref{mean square 3} can be viewed as an analogue of Tong's result.

For higher power moments of $\Delta_3(x)$, in 1992 Heath-Brown \cite{MR1159354} proved that for a large real positive $T$, the upper bound estimate
\begin{equation}\label{H-B}
  \int_{1}^{T}|{\Delta_3}(x)|^3dx\ll T^{2+\varepsilon}
\end{equation}
holds for any $\varepsilon>0$. This is the best upper bound since the average order of $\Delta_3(x)$ is $1/3$, which can be obtained by (\ref{Tong}). In this paper we give a similar result for $\Delta_{r,3}(x)$.
\begin{theorem}\label{third moment}
  Let $T\geqslant2$ be a large real number, we have
\begin{equation*}
  \int_{1}^{T}|\Delta_{r,3}(x)|^3dx\ll T^{3r-1+\varepsilon}.
\end{equation*}
\end{theorem}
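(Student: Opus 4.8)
The plan is to reduce the third-power moment of $\Delta_{r,3}(x)$ to the known Heath-Brown estimate \eqref{H-B} for the single-variable error term $\Delta_3(x)$. The starting point is to understand the structure of $\Delta_{r,3}(x)$ as obtained in the proof of Theorem \ref{U B for M T}. Writing $D_3(x)=\sum_{n\leqslant x}\tau_3(n)=M_3(x)+\Delta_3(x)$ as in \eqref{k div aver}, the multivariable sum factors in the sense that the generating identity $\tau_3(n_1\cdots n_r)$ can be expanded through Dirichlet-convolution / hyperbola arguments so that $\sum_{n_1,\dots,n_r\leqslant x}\tau_3(n_1\cdots n_r)$ is built out of the one-variable counting functions $D_3$ evaluated at $x$ times arithmetic weights. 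First I would make this explicit: after subtracting the polynomial main term $M_{r,3}(x)$, one should arrive at a representation of $\Delta_{r,3}(x)$ as a finite linear combination of terms of the shape $x^{r-1}P(\log x)\,\Delta_3(x)$ together with lower-order cross terms, where $P$ is a polynomial of degree at most $2(r-1)$. The dominant contribution is $\asymp x^{r-1}(\log x)^{2(r-1)}\Delta_3(x)$, which is exactly the heuristic consistent with the main-term exponent $2r-1/3$ appearing in Theorem \ref{mean square 3}.

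Given such a decomposition, the key step is to bound
\begin{equation*}
  \int_{1}^{T}|\Delta_{r,3}(x)|^3\,dx
  \ll (\log T)^{6(r-1)}\int_{1}^{T} x^{3(r-1)}|\Delta_3(x)|^3\,dx
  + (\text{contributions from lower-order terms}).
\end{equation*}
For the main piece I would remove the polynomial weight $x^{3(r-1)}$ by partial summation (or a dyadic decomposition of $[1,T]$), using that $x^{3(r-1)}\ll T^{3(r-1)}$ on the range and then applying \eqref{H-B} on each dyadic block $[X,2X]$. Concretely, on $[X,2X]$ one has
\begin{equation*}
  \int_{X}^{2X} x^{3(r-1)}|\Delta_3(x)|^3\,dx
  \ll X^{3(r-1)}\int_{X}^{2X}|\Delta_3(x)|^3\,dx
  \ll X^{3(r-1)}\cdot X^{2+\varepsilon}
  = X^{3r-1+\varepsilon},
\end{equation*}
and summing the geometric series over dyadic blocks $X=T2^{-j}$ gives the clean total bound $T^{3r-1+\varepsilon}$, with the logarithmic factors absorbed into $T^{\varepsilon}$. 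The lower-order cross terms, which carry smaller powers of $x$ and lower powers of $\Delta_3$, contribute strictly less and are handled the same way, again dominated by $T^{3r-1+\varepsilon}$.

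The main obstacle I anticipate is the first step: establishing cleanly that $\Delta_{r,3}(x)$ really does decompose into a $\log$-polynomial multiple of $\Delta_3(x)$ plus genuinely smaller remainders. The combinatorics of expanding $\tau_3(n_1\cdots n_r)$ over the $r$ variables — keeping careful track of which factors are absorbed into the polynomial main term $M_{r,3}(x)$ and which survive as error — is delicate, because $\tau_3$ is not multiplicative across a product in a naive way and the hyperbola method must be iterated. I would control this by working with the Dirichlet series: the generating function for the multivariable sum is $\zeta(s)^{?}$-type expressions whose polar structure at $s=1$ produces the main polynomial, while the contribution of $\Delta_3$ corresponds to the boundary behaviour captured by the single-variable error term. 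Once the representation $\Delta_{r,3}(x)=\sum_j x^{r-1}P_j(\log x)\Delta_3(x)+(\text{smaller})$ is rigorously in hand, the remainder of the proof is the routine dyadic application of Heath-Brown's bound sketched above, and the exponent $3r-1+\varepsilon$ follows immediately.
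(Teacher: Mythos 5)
Your endgame (dyadic decomposition plus Heath-Brown's bound \eqref{H-B}) is the right tool and matches the paper, but the first step of your proposal --- the claimed representation $\Delta_{r,3}(x)=\sum_j x^{r-1}P_j(\log x)\,\Delta_3(x)+(\text{genuinely smaller})$ --- is false, and it is exactly the obstacle you yourself flagged. What the convolution identity behind Theorem \ref{U B for M T} actually produces is \eqref{Deltark exp}:
\begin{equation*}
\Delta_{r,3}(x)=r\sum_{m_1,\cdots,m_r\leqslant x}f_{r,3}(m_1,\cdots,m_r)\prod_{j=1}^{r-1}M_3\left(\frac{x}{m_j}\right)\Delta_3\left(\frac{x}{m_r}\right)+O(x^{r-1+\varepsilon}),
\end{equation*}
that is, a weighted sum over \emph{all dilates} $\Delta_3(x/m_r)$ with $1\leqslant m_r\leqslant x$, not a log-polynomial multiple of the single value $\Delta_3(x)$. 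These dilated terms are not lower order: $\Delta_3(x/m)$ has average size $(x/m)^{1/3}$, so every fixed $m_r$ contributes at the same power $x^{r-1+1/3}$ as the $m_r=1$ term, merely with a slowly decaying arithmetic weight; there is no way to isolate $m_r=1$ and dismiss the rest as remainders. (The oscillations of $\Delta_3(x/m)$ for different $m$ are also not proportional to those of $\Delta_3(x)$, as is already visible in the mean-square constant of Theorem \ref{mean square 3}, which involves a genuine diagonal sum over pairs $m_r,m_{2r}$.) Consequently your central inequality $\int_1^T|\Delta_{r,3}(x)|^3dx\ll(\log T)^{6(r-1)}\int_1^T x^{3(r-1)}|\Delta_3(x)|^3dx+\cdots$ does not follow, and your Dirichlet-series fallback does not rescue it: the polar structure at $s=1$ yields the main term $M_{r,3}(x)$, but it does not factor the error term through $\Delta_3(x)$ alone.

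The repair, which is what the paper does, is to run your dyadic Heath-Brown argument \emph{through the $m$-sum} rather than after collapsing it. On a block $[T,2T]$, bound $\prod_{j\leqslant r-1}M_3(x/m_j)\ll x^{r-1+\varepsilon}/(m_1\cdots m_{r-1})$, apply H\"older's inequality with exponents $(3/2,3/2,3)$ (equivalently Minkowski's inequality in $L^3$) to the sum over $(m_1,\cdots,m_r)$, exchange summation and integration, and substitute $u=x/m_r$ in each integral, so that
\begin{equation*}
\int_{T}^{2T}\left|\Delta_3\left(\frac{x}{m_r}\right)\right|^3dx=m_r\int_{T/m_r}^{2T/m_r}|\Delta_3(u)|^3du\ll m_r\left(\frac{T}{m_r}\right)^{2+\varepsilon}\ll T^{2+\varepsilon}{m_r}^{-1+\varepsilon}
\end{equation*}
by \eqref{H-B}. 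The resulting weighted sum $\sum|f_{r,3}(m_1,\cdots,m_r)|/(m_1\cdots m_{r-1}{m_r}^{1/3})$ converges by Lemma \ref{Toth1} (the exponents $(1,\cdots,1,1/3)$ are positive with all pairwise sums exceeding $1$), giving $T^{3r-3+\varepsilon}\cdot T^{2+\varepsilon}=T^{3r-1+\varepsilon}$ per block, and summing over $T/2^j$ finishes. Without this dilate-summation step your proof does not go through as written.
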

Using Theorem \ref{mean square 3} we have the average order of $\Delta_{r,3}(x)$ is $(r-2/3)$. Thus this is also the best upper bound.

Moreover, we obtain the following result.
\begin{theorem}\label{first moment}
   Let $T\geqslant2$ be a large real number, we have
\begin{equation*}
  \int_{1}^{T}\Delta_{r,3}(x)dx\ll T^{r+1/6+\varepsilon}.
\end{equation*}
\end{theorem}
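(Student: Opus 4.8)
The plan is to exploit the fact that the estimate we want, $T^{r+1/6+\varepsilon}$, is strictly better than what one gets for free. Indeed, Cauchy--Schwarz together with Theorem \ref{mean square 3} gives
\[
\int_{1}^{T}\bigl|\Delta_{r,3}(x)\bigr|\,dx\le T^{1/2}\Bigl(\int_{1}^{T}\Delta_{r,3}^{2}(x)\,dx\Bigr)^{1/2}\ll T^{1/2}\cdot T^{r-1/6+\varepsilon}=T^{r+1/3+\varepsilon},
\]
so the content of the theorem is the extra saving of $T^{1/6}$, which can only come from the sign oscillation of $\Delta_{r,3}$ that is lost when one inserts absolute values. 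Accordingly I would not bound $|\Delta_{r,3}|$ but instead feed in the explicit oscillatory (truncated Voronoi-type) representation of $\Delta_{r,3}(x)$ already constructed in the proof of Theorem \ref{mean square 3}, namely a formula of the shape
\[
\Delta_{r,3}(x)=C\,x^{r-2/3}\sum_{n\le N}\frac{b(n)}{n^{2/3}}\cos\!\bigl(6\pi (nx)^{1/3}+\eta\bigr)+R(x,N),
\]
where $\eta$ is an absolute phase, $b(n)\ll \tau_{3}(n)(\log n)^{2(r-1)}\ll n^{\varepsilon}$, and $R(x,N)$ is the truncation error, to be controlled after integration by a suitable choice of $N$.

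Next I would integrate term by term over $[1,T]$. The gain comes from the oscillatory integral $\int_{1}^{T}x^{r-2/3}\cos(6\pi (nx)^{1/3}+\eta)\,dx$: writing $\psi(x)=6\pi (nx)^{1/3}+\eta$, so that $\psi'(x)=2\pi n^{1/3}x^{-2/3}$ and $x^{r-2/3}/\psi'(x)=x^{r}/(2\pi n^{1/3})$, a single integration by parts produces the boundary contribution $\tfrac{T^{r}}{2\pi n^{1/3}}\sin\psi(T)$ together with a remainder whose integrand is $O(x^{r-1}n^{-1/3})$. Hence the diagonal contribution of the $n$-th term is $\tfrac{T^{r}}{2\pi}\cdot\tfrac{b(n)}{n}\sin\!\bigl(6\pi (nT)^{1/3}+\eta\bigr)$, and summing the remainder trivially against $b(n)n^{-2/3}$ gives only $O(T^{r+\varepsilon})$. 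Thus
\[
\int_{1}^{T}\Delta_{r,3}(x)\,dx=\frac{CT^{r}}{2\pi}\sum_{n\le N}\frac{b(n)}{n}\sin\!\bigl(6\pi (nT)^{1/3}+\eta\bigr)+O\bigl(T^{r+\varepsilon}\bigr)+\int_{1}^{T}R(x,N)\,dx,
\]
so everything is reduced to bounding one weighted exponential sum and the integrated tail.

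The whole theorem now rests on the estimate
\[
\sum_{n\le N}\frac{b(n)}{n}\sin\!\bigl(6\pi (nT)^{1/3}+\eta\bigr)\ll T^{1/6+\varepsilon},
\]
which I would obtain by partial summation, reducing to the divisor-weighted exponential sums $\sum_{n\sim M}\tau_{3}(n)\,e\!\bigl(3(nT)^{1/3}\bigr)$ over dyadic blocks $M\le N$; the phase $f(n)=3T^{1/3}n^{1/3}$ has $f''(n)\asymp T^{1/3}n^{-5/3}$, so van der Corput's estimate (or a suitable exponent pair), applied after splitting off the $\tau_{3}$ weight by the Dirichlet hyperbola method, yields the cancellation responsible for the passage from the trivial exponent $1/3$ to $1/6$. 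The extra logarithmic factors from $b(n)=\tau_{3}(n)(\log n)^{2(r-1)}$ are harmless and are absorbed into $T^{\varepsilon}$. Finally one chooses the truncation parameter $N$ (a fixed power of $T$) so that $\int_{1}^{T}R(x,N)\,dx\ll T^{r+1/6+\varepsilon}$, balancing it against the main sum. I expect this exponential-sum step to be the main obstacle: it is precisely the mechanism underlying the classical first-moment bound $\int_{1}^{T}\Delta_{3}(x)\,dx\ll T^{7/6+\varepsilon}$ (the $r=1$ shadow of our statement), and the $\tau_{3}$-weighting together with the non-algebraic phase $(nT)^{1/3}$ is what makes the requisite cancellation delicate, whereas the term-by-term integration and the disposal of the lower-order and truncation terms are routine.
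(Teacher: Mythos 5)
Your proposal fails at the step you dismiss as routine, and the step you identify as the main obstacle is in fact trivial. The exponential sum you single out is harmless: since $b(n)\ll\tau_3(n)n^{\varepsilon}$, absolute values already give $\sum_{n\leqslant N}b(n)n^{-1}\sin\bigl(6\pi(nT)^{1/3}+\eta\bigr)\ll(\log N)^{3+2(r-1)}\ll T^{\varepsilon}$, far below your target $T^{1/6+\varepsilon}$, so no van der Corput or exponent-pair input is needed there at all. The genuine obstruction is $\int_{1}^{T}R(x,N)\,dx$. The only control available on the tail of the Voronoi expansion of $\Delta_3$ (here and in the literature the paper relies on) is the \emph{mean-square} bound of Lemma \ref{delta32 mean square}, namely $\int_{T}^{2T}\delta_{32}^{2}(x,N)\,dx\ll T^{5/3+\varepsilon}N^{-1/3}+T^{14/9+\varepsilon}$, and it is valid only for $N\ll T^{2/3}$. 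By Cauchy--Schwarz this yields $\int_{T}^{2T}|\delta_{32}(x,N)|\,dx\ll T^{4/3+\varepsilon}N^{-1/6}+T^{23/18+\varepsilon}$; even at the maximal admissible $N=T^{2/3}$ this is $\gg T^{11/9+\varepsilon}$, and after multiplying by the factor $x^{r-1+\varepsilon}$ coming from $\prod_{j}M_3(x/m_j)$ the tail contributes at least about $T^{r+2/9+\varepsilon}$, which exceeds the target $T^{r+1/6+\varepsilon}$ since $2/9>1/6$. No choice of $N$ repairs this: putting absolute values on $R(x,N)$ discards exactly the sign cancellation you correctly identified as the source of the saving, and that cancellation resurfaces inside the truncation error, where you have no oscillatory handle. (A secondary, repairable point: the representation you posit, with integer frequencies $(nx)^{1/3}$ and coefficients $b(n)\ll n^{\varepsilon}$, is not what the paper's decomposition produces --- the phases are $6\pi(nx/m_r)^{1/3}$ with rational frequencies $n/m_r$, and the coefficients carry powers of $\log x$ from $M_3(x/m_j)$.)

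The paper sidesteps truncation entirely. It reduces the theorem, via $\Delta_{r,3}^{*}$ and the splitting $I_1+I_2$ with $I_2\ll T^{r+\varepsilon}$ handled trivially, to the one-variable bound $\int_{1}^{T}\Delta_3(x)\,dx\ll T^{7/6+\varepsilon}$ (Lemma \ref{delta3 first moment}); that lemma is proved analytically, by the smoothed Perron formula of Lemma \ref{pan pan} with kernel $x^{s}/(s(s+1))$, a contour shift to $\Re s=1/6+\varepsilon$, the functional equation $\zeta(s)=\chi(s)\zeta(1-s)$, and the fourth moment of $\zeta$ on $\Re s=5/6$, so the oscillation of $\Delta_3$ is captured with no Voronoi truncation to control. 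The lemma is then transported to $\Delta_{r,3}$ by integrating by parts against the smooth factor $\prod_{j}M_3(x/m_j)$, bounding $S(u)=\int_{1}^{u}\Delta_3$ by the lemma, with all $m$-sums convergent by Lemma \ref{Toth1}. To rescue your route you would need a first-moment (not mean-square) bound $\int_{1}^{T}\delta_{32}(x,N)\,dx\ll T^{7/6+\varepsilon}$, which is essentially equivalent to what Lemma \ref{delta3 first moment} proves directly.
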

\begin{corollary}
  For a large real number $T$, $\Delta_{r,3}(x)$ has at least $T^{5/96-\varepsilon}$ sign changes in $[T,2T]$.
\end{corollary}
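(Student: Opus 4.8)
The plan is to deduce the sign-change count from three facts already in hand: the mean-square asymptotic of Theorem \ref{mean square 3}, the pointwise bound furnished by Theorem \ref{U B for M T}, and the first-moment bound of Theorem \ref{first moment}. The underlying principle is the standard one for such problems: a function whose second moment is large, but whose sup-norm is moderate and whose integral over every subinterval is small because of cancellation, is forced to oscillate many times.

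First I would localize the three ingredients to the dyadic block $[T,2T]$. Subtracting the formula of Theorem \ref{mean square 3} at $T$ from that at $2T$, and using that the leading coefficient of $L_{4r-4}$ is positive so that the main terms do not cancel (the gain being the factor $2^{2r-1/3}-1>0$ against an error of smaller order $T^{2r-7/18+\varepsilon}$), I obtain
\begin{equation*}
  \int_T^{2T}\Delta_{r,3}^2(x)\,dx\gg T^{2r-1/3}.
\end{equation*}
Next, Theorem \ref{U B for M T} with $k=3$ and $\alpha_3\leqslant 43/96$ gives the uniform bound
\begin{equation*}
  \max_{T\leqslant x\leqslant 2T}|\Delta_{r,3}(x)|\ll T^{r-53/96+\varepsilon},
\end{equation*}
since $r-1+43/96=r-53/96$. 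Finally, writing $\int_a^b=\int_1^b-\int_1^a$ for any $T\leqslant a\leqslant b\leqslant 2T$, Theorem \ref{first moment} yields
\begin{equation*}
  \sup_{T\leqslant a\leqslant b\leqslant 2T}\Big|\int_a^b\Delta_{r,3}(x)\,dx\Big|\ll T^{r+1/6+\varepsilon}.
\end{equation*}

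Then I would run the counting argument. Let $N$ be the number of sign changes of $\Delta_{r,3}$ on $[T,2T]$; they split the block into $N+1$ subintervals $I_1,\dots,I_{N+1}$ on each of which $\Delta_{r,3}$ keeps a constant sign, so that $\int_{I_j}|\Delta_{r,3}|=|\int_{I_j}\Delta_{r,3}|$. Estimating one factor of $\Delta_{r,3}^2$ by the sup-norm and the remaining integral by the first-moment bound gives
\begin{equation*}
  T^{2r-1/3}\ll\int_T^{2T}\Delta_{r,3}^2\,dx=\sum_{j=1}^{N+1}\int_{I_j}\Delta_{r,3}^2\,dx\ll (N+1)\,T^{r-53/96+\varepsilon}\,T^{r+1/6+\varepsilon}.
\end{equation*}
Solving for $N$, the resulting exponent is $2r-\tfrac13-(r-\tfrac{53}{96})-(r+\tfrac16)=\tfrac{5}{96}$, so $N\gg T^{5/96-\varepsilon}$, which is the claim.

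The main point requiring care is not the combinatorics but the two preliminary verifications: that the leading coefficient of $L_{4r-4}$ is genuinely positive, so that the dyadic mean square really is $\gg T^{2r-1/3}$, and that the notion of sign change is well defined for $\Delta_{r,3}$, which is a step function minus a smooth main term and hence only of bounded variation. The latter is harmless, since the inequalities above use only that $\Delta_{r,3}$ is Riemann integrable and of one sign on each $I_j$, but it should be stated explicitly. Everything else is a direct combination of the three cited theorems, and no new analytic input beyond them is needed.
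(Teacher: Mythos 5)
Your proposal is correct, and it runs on exactly the same three inputs as the paper's proof --- the mean-square asymptotic of Theorem \ref{mean square 3}, the pointwise bound $\Delta_{r,3}(x)\ll x^{r-1+43/96+\varepsilon}$ obtained from Theorem \ref{U B for M T} with $\alpha_3\leqslant 43/96$, and the first-moment bound of Theorem \ref{first moment} --- with the identical exponent arithmetic $2r-\tfrac13-(r-\tfrac{53}{96})-(r+\tfrac16)=\tfrac{5}{96}$. The packaging, however, is genuinely different. The paper argues locally: it evaluates the mean square over a short window, $\int_T^{T+H}\Delta_{r,3}^2(x)\,dx\asymp HT^{2r-4/3}$ for $H\gg T^{17/18+\varepsilon}$ (this is where the error term $O(T^{2r-7/18+\varepsilon})$ forces the constraint on $H$), divides by the sup-norm to get $\int_T^{T+H}|\Delta_{r,3}(x)|\,dx\gg HT^{r-25/32}$, and observes that for $H\gg T^{91/96+\varepsilon}$ this strictly exceeds the first-moment bound $T^{r+1/6+\varepsilon}$, so $\Delta_{r,3}$ must change sign in every such window; counting $\asymp T/H$ disjoint windows in $[T,2T]$ gives $T^{5/96-\varepsilon}$. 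You instead argue globally, splitting $[T,2T]$ into the $N+1$ maximal sign-constant intervals and bounding $\int_{I_j}\Delta_{r,3}^2\leqslant\|\Delta_{r,3}\|_\infty\,|\int_{I_j}\Delta_{r,3}|$. The paper's local version buys slightly more information --- every gap between consecutive sign changes is $\ll T^{91/96+\varepsilon}$, not just the total count --- while your version is marginally more economical, since you need the mean-square asymptotic only on the full dyadic block and never have to check that the error term is dominated on short intervals. Two small points of hygiene, both of which you correctly anticipated: your lower bound $\int_T^{2T}\Delta_{r,3}^2(x)\,dx\gg T^{2r-1/3}$ (and equally the paper's claim $\asymp HT^{2r-4/3}$) tacitly requires that the leading coefficient of $L_{4r-4}$, essentially $D_{r,3,2(r-1),2(r-1)}/(2r-\tfrac13)$, is positive --- the paper asserts the $\asymp$ without comment, so your flagging of this is if anything more careful than the source; and your remark that one only needs $\Delta_{r,3}$ integrable and of one sign on each $I_j$ disposes of the regularity issue. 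No gap; your argument stands as an admissible alternative proof of the corollary.
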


\begin{theorem}\label{mean square dayu4}
Let $T\geqslant2$ be a large real number, for fixed integer $k\geqslant4$, 
\begin{equation*}
  \int_{1}^{T}{\Delta_{r,k}}^2(x)dx\ll T^{2r-1+2\beta_k+\varepsilon}
\end{equation*}
holds for any $\varepsilon>0$, where $\beta_k$ are expressed by (\ref{betak}).
\end{theorem}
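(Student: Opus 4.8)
The plan is to turn the $r$-fold box sum into single-variable divisor sums through the associated multiple Dirichlet series. Writing $G(s_1,\dots,s_r)=\sum_{n_1,\dots,n_r\ge1}\tau_k(n_1\cdots n_r)\,n_1^{-s_1}\cdots n_r^{-s_r}$ and expanding the Euler product, the monomials in a single variable recombine into $\zeta(s_i)^k$ exactly, so I would first establish the factorisation $G(s_1,\dots,s_r)=\prod_{i=1}^r\zeta(s_i)^k\,B_k(s_1,\dots,s_r)$, where $B_k$ is given by an Euler product whose every non-constant local factor involves at least two distinct variables. Consequently $B_k$ converges absolutely in the region $\Re(s_i)+\Re(s_j)>1$ $(i\ne j)$, in particular near $(1,\dots,1)$. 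Passing to coefficients yields the convolution identity $\tau_k(n_1\cdots n_r)=\sum_{d_ie_i=n_i}b_k(d_1,\dots,d_r)\prod_i\tau_k(e_i)$, whence
\[
 \sum_{n_1,\dots,n_r\le x}\tau_k(n_1\cdots n_r)=\sum_{\mathbf d}b_k(\mathbf d)\prod_{i=1}^r\big(M_k(x/d_i)+\Delta_k(x/d_i)\big).
\]
Expanding the product into $2^r$ pieces according to the set of indices carrying a factor $\Delta_k$, the all-$M_k$ piece reproduces $M_{r,k}(x)$ (this is the content of Theorem \ref{U B for M T}), and $\Delta_{r,k}(x)$ splits into the ``single-$\Delta_k$'' terms and the ``multi-$\Delta_k$'' terms.

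The single-$\Delta_k$ part $E_1(x)=\sum_{i=1}^r\sum_{\mathbf d}b_k(\mathbf d)\,\Delta_k(x/d_i)\prod_{j\ne i}M_k(x/d_j)$ carries the whole bound. Using $M_k(x/d_j)\ll (x/d_j)(\log x)^{k-1}$ and collapsing the inert variables, I would write $E_1(x)\ll x^{r-1+\varepsilon}\sum_i\sum_d c_i(d)\,|\Delta_k(x/d)|$ with $c_i(d)=\sum_{d_j\,(j\ne i)}|b_k(\mathbf d)|\prod_{j\ne i}d_j^{-1}$. Since the inert variables sit at $\Re(s_j)=1$, the abscissa condition for $B_k$ imposes only $\Re(s_i)>0$, so $\sum_d c_i(d)d^{-\sigma}<\infty$ for every $\sigma>0$. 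A Cauchy--Schwarz split at any $\sigma\in(0,2\beta_k)$, the substitution $y=x/d$, and the definition of $\beta_k$ in (\ref{def alpha beta}) then give
\[
 \int_1^T E_1(x)^2\,dx\ll T^{2r-1+2\beta_k+\varepsilon}\sum_{d}c_i(d)\,d^{\sigma-2\beta_k}\ll T^{2r-1+2\beta_k+\varepsilon},
\]
which is exactly the claimed estimate; the window for $\sigma$ is non-empty precisely because $\beta_k>0$. The boundary terms coming from $d_i>x$ in the main-term piece are handled separately and are lower order, being controlled by the tail of the absolutely convergent series for $B_k$.

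The main difficulty is the multi-$\Delta_k$ terms. Each extra factor $\Delta_k(x/d_i)$ trades one factor of size $x$ for one of size $x^{\alpha_k}$, so heuristically a piece with $a\ge2$ active variables sits at $T^{\,2r+1-2a(1-\beta_k)}$, comfortably below the target since $\beta_k<1$. Making this rigorous is delicate: reducing such a term to one-variable moments of $\Delta_k$ fails, because the coupled sum over the active $d_i$ has abscissa $\Re(s_i)+\Re(s_j)>1$, which clashes with the small real parts forced by the second moment (one would be led to require $\alpha_k+\beta_k>1$, not available from (\ref{alphak})--(\ref{betak})). The honest route is to exploit the oscillation of $\Delta_k$: I would insert its Voronoi-type expansion so that a multi-$\Delta_k$ term becomes a multiple exponential sum, bound the off-diagonal by first-derivative (stationary-phase) estimates, and verify that the surviving diagonal is of strictly lower order than $T^{2r-1+2\beta_k}$, the decisive arithmetic input being $2\alpha_k-\beta_k<1$, guaranteed by (\ref{alphak}) and (\ref{betak}) for $k\ge4$. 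I expect this off-diagonal cancellation for the coupled correlations to be the main obstacle; the rest is the clean single-variable reduction above.
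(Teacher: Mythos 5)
Your handling of the single-$\Delta_k$ piece is, in substance, the paper's own proof: the factorisation of the multiple Dirichlet series is Lemma \ref{Toth1}, the convolution decomposition is (\ref{dk sum1}), and your Cauchy--Schwarz split at $\sigma\in(0,2\beta_k)$ combined with the substitution $y=x/d$ and the definition (\ref{def alpha beta}) of $\beta_k$ reproduces the chain of estimates in Section 4, where the paper obtains $\int_T^{2T}\left|\Delta_k\left(x/m_r\right)\Delta_k\left(x/m_{2r}\right)\right|dx\ll T^{1+2\beta_k+\varepsilon}(m_rm_{2r})^{-\beta_k}$ and then sums the absolutely convergent coefficient series. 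That half is correct, including your remark on the tails $d_i>x$, which matches (\ref{Mrk error}).

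The genuine gap is your treatment of the multi-$\Delta_k$ terms, which you leave unproved (``I would insert\dots'', ``I expect\dots'') and for which the route you propose is both unnecessary and unlikely to succeed: for $k\geqslant4$ there is no truncated Voronoi-type expansion of $\Delta_k$ with a controllable tail (even the $k=3$ case in this paper needs the special input of Lemma \ref{delta32 mean square}), so a stationary-phase analysis of coupled correlations of $\Delta_k$ is far beyond what the theorem requires. Your claim that ``reducing such a term to one-variable moments of $\Delta_k$ fails'' misdiagnoses the difficulty: the paper never takes moments of the coupled factors at all. It bounds \emph{both} active factors pointwise by $\Delta_k(x/m)\ll(x/m)^{\alpha_k+\varepsilon}$ and resolves exactly the abscissa clash you worry about (the requirement $\Re(s_i)+\Re(s_j)>1$ against the small exponents $\alpha_k$) by using the truncation $m_{r-1},m_r\leqslant x$ rather than oscillation: in (\ref{Deltark error}) one writes $(m_{r-1}m_r)^{-\alpha_k}=(m_{r-1}m_r)^{1/2-\alpha_k+\varepsilon}\cdot(m_{r-1}m_r)^{-1/2-\varepsilon}$, bounds the first factor by $x^{1-2\alpha_k+2\varepsilon}$ (by $1$ when $\alpha_k>1/2$), and is left with a coefficient series carrying exponents $\tfrac12+\varepsilon$ on the two active variables and $1$ on the rest, so every pairwise sum of exponents exceeds $1$ and Lemma \ref{Toth1} gives convergence. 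The result is the \emph{pointwise} bound $\Delta_{r,k}(x)={\Delta_{r,k}}^{*}(x)+O(x^{r-1+\varepsilon})$ of (\ref{Deltark exp}), proved once in Section 3; upon squaring and integrating, the multi-$\Delta_k$ contribution is $O(T^{2r-1+\varepsilon})$, negligible against the target $T^{2r-1+2\beta_k+\varepsilon}$ simply because $\beta_k>0$. No arithmetic input of the shape $2\alpha_k-\beta_k<1$ and no off-diagonal cancellation are needed, and your proposal does not prove the theorem until the speculative Voronoi step is replaced by this elementary one.
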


\paragraph{Notation.} Throughout this paper, $\varepsilon$ denotes a sufficiently small real positive number, not necessarily the same at each occurrence. As usual, $\mathbb{R}$ denotes the set of real numbers, $\mathbb{N}$ denotes the set of natural number, $\zeta$ denotes the Riemann-zeta function. For a positive integer $k$, $\tau_k$ denotes the $k$th Dirichlet divisor function. For a complex number $z$, $\Re z$ denotes the real part of $z$. For integers $m$ and $n$, $\binom{m}{n}$ denotes a binomial coefficient ($m>n$). For a complex number $\theta$, the function $e(\theta)$ denotes $e^{2\pi i\theta}$.
\section{Some lemmas}
We present some lemmas.

\begin{lemma}\label{delta32 mean square}
  Let $T\geqslant10$ be a given large real number and $N$ be a real number such that $T^{\varepsilon}\ll N\ll T^{2/3}$, and $\Delta_3(\cdot)$ be defined in (\ref{k div aver}). For any $T\leqslant x\leqslant2T$, define
\begin{equation*}
\begin{aligned}
  \delta_{31}(x,N)&=\frac{x^{1/3}}{\sqrt{3}\pi}\sum_{n\leqslant N}\frac{\tau_3(n)}{n^{2/3}}\cos\left(6\pi(nx)^{1/3}\right),\\
  \delta_{32}(x,N)&=\Delta_3(x)-\delta_{31}(x,N).
\end{aligned}
\end{equation*}
Then we have
\begin{equation*}
  \int_{T}^{2T}{\delta_{32}}^2(x,N)dx\ll T^{5/3+\varepsilon}N^{-1/3}+T^{14/9+\varepsilon}.
\end{equation*}
\end{lemma}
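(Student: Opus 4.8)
The plan is to replace $\Delta_3$ by its truncated Voronoi expansion and then to evaluate the mean square of the resulting finite exponential sum by separating diagonal and off-diagonal terms. Concretely, I would fix a parameter $M$ (to be chosen as a power of $T$) and apply the truncated Voronoi formula for $\Delta_3(x)$ at level $M$, writing $\Delta_3(x)=\tfrac{x^{1/3}}{\sqrt3\pi}\sum_{n\le M}\tfrac{\tau_3(n)}{n^{2/3}}\cos(6\pi(nx)^{1/3})+R_M(x)$ for $T\le x\le 2T$. Subtracting $\delta_{31}(x,N)$ leaves the exponential sum $S(x):=\tfrac{x^{1/3}}{\sqrt3\pi}\sum_{N<n\le M}\tfrac{\tau_3(n)}{n^{2/3}}\cos(6\pi(nx)^{1/3})$, and $(a+b)^2\le 2a^2+2b^2$ reduces the problem to
\[
  \int_T^{2T}\delta_{32}^2(x,N)\,dx\ll\int_T^{2T}S^2(x)\,dx+\int_T^{2T}R_M^2(x)\,dx.
\]

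To estimate $\int_T^{2T}S^2\,dx$ I would expand the square and split according to whether $m=n$ or $m\ne n$. The diagonal terms, after writing $\cos^2=\tfrac12(1+\cos)$ and integrating (the oscillating part being of lower order), contribute $\asymp T^{5/3}\sum_{N<n\le M}\tau_3^2(n)n^{-4/3}$; since $\sum_{n\le y}\tau_3^2(n)\ll y(\log y)^{8}$, partial summation bounds the tail of this series by $N^{-1/3+\varepsilon}$, producing the first claimed term $T^{5/3+\varepsilon}N^{-1/3}$. For the off-diagonal terms the product-to-sum identity introduces the oscillatory integrals $\int_T^{2T}x^{2/3}\cos\big(6\pi x^{1/3}(m^{1/3}\pm n^{1/3})\big)\,dx$; the $+$ sign has phase derivative $\asymp T^{-2/3}(m^{1/3}+n^{1/3})$ bounded away from zero and is negligible, while for the $-$ sign the first-derivative test gives $\ll T^{4/3}|m^{1/3}-n^{1/3}|^{-1}$. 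Inserting $|m^{1/3}-n^{1/3}|\asymp|m-n|\max(m,n)^{-2/3}$, using $\tau_3(n)\ll n^\varepsilon$, and summing over $|m-n|$ and over $n$, the off-diagonal collapses to $\ll T^{4/3+\varepsilon}\sum_{n\le M}\tau_3(n)n^{-2/3}\ll T^{4/3+\varepsilon}M^{1/3}$. Choosing $M\asymp T^{2/3}$ turns this into the second claimed term $T^{14/9+\varepsilon}$.

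The main obstacle is the control of the truncation error $\int_T^{2T}R_M^2\,dx$ at the level $M\asymp T^{2/3}$ that the off-diagonal forces upon us. This forcing is precisely the feature separating $k=3$ from the classical case $k=2$: here $\sum_{n\le M}\tau_3(n)n^{-2/3}$ grows like the \emph{positive} power $M^{1/3}$, whereas its analogue $\sum_{n\le M}d(n)n^{-1}$ for $\Delta_2$ grows only logarithmically, so $M$ cannot be taken large. At $M\asymp T^{2/3}$ the naive pointwise bound for $R_M$ is too weak — squaring and integrating it would only deliver the inferior floor $T^{5/3}$ — so $\int_T^{2T}R_M^2\,dx$ must be bounded by a genuine mean-square argument for the high-frequency range $n>T^{2/3}$ (whose diagonal contributes merely $T^{5/3}(T^{2/3})^{-1/3}=T^{13/9}$). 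The delicate point, and where the bulk of the technical work lies, is that a term-by-term off-diagonal bound would now diverge in the upper cutoff, so one must retain the cancellation among the tail terms, either by importing a sharp mean-square estimate for the truncated Voronoi series (the $\Delta_3$ analogue of the input behind Tong's formula \eqref{Tong}, in the spirit of the estimates in \cite{GZ}) or by an Atkinson-type treatment of that range. Once this tail mean square is shown to be $\ll T^{14/9+\varepsilon}$, the hypothesis $T^\varepsilon\ll N\ll T^{2/3}$ guarantees that the diagonal dominates for small $N$ while the floor $T^{14/9+\varepsilon}$ takes over as $N\to T^{2/3}$, and collecting the three estimates yields the stated bound.
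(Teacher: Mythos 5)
Your proposal is not a complete proof: it reduces the lemma to exactly the estimate that constitutes its substance, and then assumes that estimate. The parts you actually carry out are the routine ones, and they are correct: the diagonal computation with $\sum_{n\le y}\tau_3^2(n)\ll y\log^8 y$ and partial summation does yield $T^{5/3+\varepsilon}N^{-1/3}$, and the first-derivative-test treatment of the off-diagonal terms correctly produces $T^{4/3+\varepsilon}M^{1/3}$, which at $M\asymp T^{2/3}$ gives $T^{14/9+\varepsilon}$. But the crux is the tail: you need $\int_T^{2T}R_M^2(x)\,dx\ll T^{14/9+\varepsilon}$ for the truncation error at level $M\asymp T^{2/3}$, and here you say only that one should either import ``a sharp mean-square estimate for the truncated Voronoi series'' or do ``an Atkinson-type treatment of that range.'' That deferred input is not a technical footnote; it is the lemma. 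Note that the paper itself offers no internal argument either --- its entire proof is the citation to Lemma 2.12 of Cao, Tanigawa and Zhai \cite{MR4421948} --- and what that cited lemma proves is precisely the mean-square control of $\Delta_3$ minus its truncated expansion, i.e.\ the statement you have reorganized but not established. Your sketch reconstructs the outer layer of such a proof while leaving its core as a black box of comparable depth to the original claim.

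Two further points sharpen why the gap is genuine rather than cosmetic. First, for $k=3$ even the existence of a truncated Voronoi formula with a usable remainder $R_M$ is not elementary: unlike the $k=2$ case there is no pointwise expansion with a small error at moderate $M$, and the standard route (Perron's formula, contour shifts, and the functional equation for $\zeta^3(s)$) controls the tail only in mean square, using moment estimates for $\zeta$ on the critical line such as $\int_0^T|\zeta(\tfrac12+it)|^6\,dt\ll T^{5/4+\varepsilon}$; the exponent $14/9$ in the lemma is an artifact of exactly this analytic input, which your argument never touches. Second, as you yourself observe, a term-by-term first-derivative-test bound diverges in the upper cutoff for the range $n>T^{2/3}$, so no rearrangement of the finite-sum computation you performed can close the gap --- genuine cancellation in the tail must be exhibited. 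Identifying the obstacle correctly, as you do, is not the same as overcoming it; as written, the proposal proves the lemma only modulo an unproven estimate equivalent to the cited Lemma 2.12.
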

\begin{proof}
  See Lemma 2.12 in Cao, Tanigawa and Zhai \cite{MR4421948}.
\end{proof}

\begin{lemma}\label{polynomial}
  Let $k\geqslant3$, $r\geqslant2$ be integers, and $t_1,\cdots,t_r$ be complex numbers such that $0<|t_1|,\cdots,|t_r|<1$. Then
\begin{equation*}
\begin{aligned}
  &-(r-1)(1-t_1)^k\cdots(1-t_r)^k+\sum_{j=1}^{r}\prod_{\substack{i=1\\i\neq j}}^{r}(1-t_i)^k\\
  =&1-\sum_{j=2}^{r}(j-1)\sum_{1\leqslant a_1<a_2<\cdots<a_j\leqslant r}\sum_{1\leqslant b_1<b_2<\cdots<b_j\leqslant k}
  (-1)^{b_1+\cdots+b_j}\binom{k}{b_1}\cdots\binom{k}{b_j}{t_{a_1}}^{b_1}\cdots{t_{a_j}}^{b_j},
\end{aligned}
\end{equation*}
which means this polynomial does not contain terms of the form $c{t_j}^l$ $(l>0$ and $c\neq0)$ for every $1\leqslant j\leqslant r$.
\end{lemma}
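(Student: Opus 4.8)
The plan is to expand everything in terms of the single-variable ``pure power'' pieces and to track one integer coefficient attached to each subset. Write each factor as $(1-t_i)^k=1+f_i$, where
\begin{equation*}
  f_i=\sum_{b=1}^{k}(-1)^b\binom{k}{b}t_i^b
\end{equation*}
collects exactly those monomials that are powers of the single variable $t_i$ with positive exponent. The point of isolating $f_i$ is that a product $\prod_{i\in S}f_i$ with $|S|\geqslant2$ produces only monomials genuinely involving at least two distinct variables, whereas each $f_i$ standing alone supplies all (and only) the forbidden terms $c\,t_i^l$. Since all the expressions are finite polynomials, the hypothesis $0<|t_i|<1$ plays no analytic role; the identity is purely formal.

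First I would expand both products over subsets using $\prod_{i\in A}(1+f_i)=\sum_{S\subseteq A}\prod_{i\in S}f_i$. For the first term this gives $\prod_{i=1}^{r}(1-t_i)^k=\sum_{S\subseteq\{1,\dots,r\}}\prod_{i\in S}f_i$, and for the second, for each fixed $j$, $\prod_{i\neq j}(1-t_i)^k=\sum_{S\subseteq\{1,\dots,r\}\setminus\{j\}}\prod_{i\in S}f_i$. Then I would interchange the order of summation in $\sum_{j=1}^{r}\sum_{S\subseteq\{1,\dots,r\}\setminus\{j\}}$: a fixed $S$ is counted once for each $j\notin S$, that is, $r-|S|$ times, so the second term equals $\sum_{S\subseteq\{1,\dots,r\}}(r-|S|)\prod_{i\in S}f_i$.

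Combining the two, the coefficient of $\prod_{i\in S}f_i$ in the whole expression is $-(r-1)+(r-|S|)=1-|S|$, so that
\begin{equation*}
  -(r-1)\prod_{i=1}^{r}(1-t_i)^k+\sum_{j=1}^{r}\prod_{i\neq j}(1-t_i)^k=\sum_{S\subseteq\{1,\dots,r\}}(1-|S|)\prod_{i\in S}f_i.
\end{equation*}
The crucial observation is that the coefficient $1-|S|$ vanishes exactly when $|S|=1$; hence all singleton contributions $f_i$, which are precisely the pure single-variable terms, cancel. The subset $S=\emptyset$ contributes the constant $1$, and every remaining subset has $|S|=j\geqslant2$ with coefficient $-(j-1)$. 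Writing $\sum_{|S|=j}\prod_{i\in S}f_i=\sum_{1\leqslant a_1<\cdots<a_j\leqslant r}f_{a_1}\cdots f_{a_j}$ and substituting $f_{a_m}=\sum_{b=1}^{k}(-1)^b\binom{k}{b}t_{a_m}^b$ into each factor then multiplies out to the stated right-hand side, and since every surviving monomial carries $j\geqslant2$ distinct variables $t_{a_1},\dots,t_{a_j}$, the polynomial contains no term of the form $c\,t_j^l$ with $l>0$.

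I do not expect a serious obstacle: the whole argument is formal bookkeeping over subsets, and its entire content is the arithmetic identity $-(r-1)+(r-|S|)=1-|S|$ together with its vanishing at $|S|=1$. The only place needing minor care is the re-indexing of $\sum_{|S|=j}\prod_{i\in S}f_i$ into the nested sums over $a_1<\cdots<a_j$ and the exponents, so as to match the displayed closed form exactly.
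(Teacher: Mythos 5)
Your argument is correct, and it is in essence the paper's own proof carried out in full: the paper's one-line proof observes that for a fixed index set $\{a_1,\dots,a_j\}$ the sum $\sum_{j'}\prod_{i\neq j'}$ contains the product $(1-t_{a_1})^k\cdots(1-t_{a_j})^k$ exactly $(r-j)$ times and then compares coefficients against the $-(r-1)$ from the first term; your bookkeeping identity $-(r-1)+(r-|S|)=1-|S|$, obtained after writing $(1-t_i)^k=1+f_i$ and expanding over subsets, is precisely that coefficient comparison, made rigorous. The vanishing at $|S|=1$ gives the absence of pure powers $c\,t_j^l$, which is the only content of the lemma that the paper actually uses (in the proof of Lemma \ref{Toth1}).

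One caveat about your last step: you assert that substituting $f_{a_m}=\sum_{b=1}^{k}(-1)^b\binom{k}{b}t_{a_m}^b$ ``multiplies out to the stated right-hand side exactly.'' It does not, quite: your (correct) expansion of $f_{a_1}\cdots f_{a_j}$ runs over \emph{independent} exponents $1\leqslant b_1,\dots,b_j\leqslant k$, whereas the lemma's display restricts to strictly increasing $b_1<b_2<\cdots<b_j$. The discrepancy is real: for $r=2$, $k=3$ the left side equals $1-f_1f_2$, which contains the monomial $9t_1t_2$ (exponents $b_1=b_2=1$), yet no such term is allowed by the constraint $b_1<b_2$ in the display. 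So the paper's displayed closed form is misstated (the inner sum should be over independent $b_1,\dots,b_j\in\{1,\dots,k\}$), and your derivation yields the corrected identity; the qualitative conclusion about single-variable terms, which is all that matters downstream, is unaffected. You should simply flag this rather than claim an exact match to the printed formula.
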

\begin{proof}
  For any given $1\leqslant j\leqslant r$, there are $(r-j)$ terms in the latter series which contains $(1-t_{a_1})^k\cdots(1-t_{a_j})^k$. By comparing the coefficients of $(1-t_{a_1})^k\cdots(1-t_{a_j})^k$ in the first term and in the latter series, and using the binomial theorem we can finish the proof.
\end{proof}

\begin{lemma}\label{Toth1}
Let $k\geqslant2$ be a fixed integer and let $s_1,\cdots,s_k$ be complex numbers. Then for $\Re{s_j}>1 (j=1,2,\cdots,k)$, we have
\begin{equation*}
 \sum_{n_1,\cdots,n_r=1}^{\infty}\frac{\tau_k(n_1\cdots n_r)}{n_1^{s_1}\cdots n_r^{s_r}}=\zeta^k(s_1)\cdots \zeta^k(s_r)F_{r,k}(s_1,\cdots,s_r),
\end{equation*}
where
\begin{equation*}
 F_{r,k}(s_1,\cdots,s_r)=\sum_{n_1,\cdots,n_r=1}^{\infty}\frac{f_{r,k}(n_1,\cdots,n_r)}{n_1^{s_1}\cdots n_r^{s_r}}.
\end{equation*}
This series is absolutely convergent provided that $\Re s_j>0$ and $\Re(s_j+s_l)>1 (1\leqslant j,l\leqslant r)$, and $f_{r,k}(n_1,\cdots,n_r)$ is multiplicative and symmetric in all variables.\\
Moreover, for any function $g(n_1,\cdots,n_r)$ satisfying $g(n_1,\cdots,n_r)\ll\left(\prod_{j=1}^{r}n_j\right)^{\varepsilon}$, the series
 \begin{equation}\label{toth col}
 \sum_{n_1,\cdots,n_r=1}^{\infty}\frac{f_{r,k}(n_1,\cdots,n_r)g(n_1,\cdots,n_r)}{n_1^{s_1}\cdots n_r^{s_r}}
 \end{equation}
is absolutely convergent provided that $\Re s_j>0$ and $\Re(s_j+s_l)>1 (1\leqslant j,l\leqslant r)$.
\end{lemma}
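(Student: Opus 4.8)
The plan is to run the standard Euler-product argument for the jointly multiplicative function $(n_1,\dots,n_r)\mapsto\tau_k(n_1\cdots n_r)$ and then read off $F_{r,k}$ from the local factors. Since $\tau_k$ is multiplicative, the summand factors over coprime blocks of the coordinates, so in the region $\Re s_j>1$ the left-hand Dirichlet series $D(s_1,\dots,s_r)$ converges absolutely: this follows from the submultiplicativity $\tau_k(mn)\leqslant\tau_k(m)\tau_k(n)$, hence $\tau_k(n_1\cdots n_r)\leqslant\prod_j\tau_k(n_j)$, together with comparison against $\prod_j\zeta^k(s_j)$. In that region $D$ factors as $\prod_p E_p$ with local factor $E_p=\sum_{a_1,\dots,a_r\geqslant0}\tau_k(p^{a_1+\cdots+a_r})\,p^{-a_1s_1-\cdots-a_rs_r}$; writing $x_j=p^{-s_j}$, the object to analyze is $E_p=E_p(x_1,\dots,x_r)$.

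Next I would compute $E_p$ explicitly. Using $\tau_k(p^m)=\binom{m+k-1}{k-1}$ and grouping by the total exponent $A=a_1+\cdots+a_r$ gives $E_p=\sum_{A\geqslant0}\binom{A+k-1}{k-1}h_A(x_1,\dots,x_r)$, where $h_A$ is the complete homogeneous symmetric polynomial. Since $\binom{A+k-1}{k-1}$ is a polynomial in $A$ of degree $k-1$, applying the corresponding polynomial in the Euler operator $z\,d/dz$ to the generating identity $\sum_A h_A(x)z^A=\prod_j(1-zx_j)^{-1}$ and evaluating at $z=1$ exhibits $E_p$ as a rational function whose only poles are at $x_j=1$, of order at most $k$. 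Hence $E_p=G_{r,k}(x_1,\dots,x_r)\prod_j(1-x_j)^{-k}$ for a polynomial $G_{r,k}$ that is the \emph{same} for every $p$. The key structural point is that specializing all but one variable to $0$ collapses $E_p$ to the single-variable factor $(1-x_j)^{-k}$, so $G_{r,k}(0,\dots,0,x_j,0,\dots,0)=1$; equivalently $G_{r,k}=1+(\text{terms each divisible by at least two distinct }x_i)$, in the same spirit as the vanishing of pure terms in Lemma \ref{polynomial}. Defining $f_{r,k}$ through the coefficients of $G_{r,k}$ gives $F_{r,k}=\prod_p G_{r,k}(p^{-s_1},\dots,p^{-s_r})$ and the claimed factorization $D=\prod_j\zeta^k(s_j)\,F_{r,k}$ on $\Re s_j>1$; multiplicativity of $f_{r,k}$ is immediate from the Euler product, and its symmetry follows from that of $\tau_k(n_1\cdots n_r)$.

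For the convergence of $F_{r,k}$ in the larger region, I would estimate the Euler factor of $\sum|f_{r,k}|/\prod_j n_j^{\sigma_j}$ with $\sigma_j=\Re s_j$. Because every nonconstant monomial of $G_{r,k}$ involves two distinct variables $x_j,x_l$, each such term is bounded, up to an absolute constant, by $p^{-(\sigma_j+\sigma_l)}$, so the local sum is $1+O\!\big(p^{-\min_{j\neq l}(\sigma_j+\sigma_l)}\big)$; under $\sigma_j>0$ and $\min_{j\neq l}(\sigma_j+\sigma_l)>1$ the product over $p$ converges, which is exactly the stated domain. The same boundedness of the finitely many, $p$-independent local coefficients gives $|f_{r,k}(n_1,\dots,n_r)|\leqslant C^{\omega(n_1\cdots n_r)}\ll(\prod_j n_j)^{\varepsilon}$.

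Finally, the \emph{moreover} statement follows by an open-region trick: from $g\ll(\prod_j n_j)^{\varepsilon}$ one has $|f_{r,k}g|/\prod_j n_j^{s_j}\ll|f_{r,k}|/\prod_j n_j^{\sigma_j-\varepsilon}$, and since the convergence region is open one may choose $\varepsilon$ small enough that $\sigma_j-\varepsilon>0$ and $(\sigma_j-\varepsilon)+(\sigma_l-\varepsilon)>1$ still hold, whereupon the first part applies with shifted exponents. I expect the main obstacle to be the local computation of the second paragraph, namely confirming that the poles of $E_p$ are confined to $x_j=1$ with order at most $k$ so that $G_{r,k}$ is genuinely a polynomial of bounded degree for general $r$; this is elementary but the most technical bookkeeping, after which the convergence estimates are routine, and the specialization argument (setting all but one variable to zero) lets one avoid ever writing down the closed form of $G_{r,k}$.
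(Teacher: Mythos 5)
Your proof is correct, and while it follows the paper's overall skeleton (Euler product from joint multiplicativity, then show the local factor has no ``pure'' terms, then deduce convergence), the key mechanism is genuinely different. The paper splits the local sum into the sub-sums with at most one nonzero exponent, which reassemble to $(1-r)+\sum_{j}(1-x_j)^{-k}$ with $x_j=p^{-s_j}$, and then invokes a bespoke combinatorial identity (Lemma \ref{polynomial}, proved by comparing coefficients via the binomial theorem) to show the pure powers $x_j^{\ell}$ cancel; the mixed part of its local factor remains an \emph{infinite} power series of mixed monomials. You instead compute $E_p$ in closed form: writing $\tau_k(p^A)=\binom{A+k-1}{k-1}=P(A)$ with $\deg P=k-1$ and using $\sum_A h_A(x)z^A=\prod_j(1-zx_j)^{-1}$, you obtain $E_p=P(z\,d/dz)\prod_j(1-zx_j)^{-1}\big|_{z=1}$, so that $E_p\prod_j(1-x_j)^{k}=G_{r,k}(x_1,\dots,x_r)$ is a genuine polynomial, the same for every $p$, and the specialization $x_i=0$ $(i\neq j)$ kills all pure terms at a stroke, with no combinatorial identity needed. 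This buys two things: Lemma \ref{polynomial} becomes redundant, and the polynomiality of $G_{r,k}$ (finitely many $p$-independent monomials, each involving two distinct variables) makes the local estimate $1+O\big(p^{-\min_{j\neq l}(\sigma_j+\sigma_l)}\big)$, and hence the bound $|f_{r,k}(n_1,\dots,n_r)|\leqslant C^{\omega(n_1\cdots n_r)}\ll(\prod_j n_j)^{\varepsilon}$, completely immediate, whereas the paper's route must in principle still sum an infinite mixed tail uniformly in $p$ (a step it passes over silently); your structural claim about the pole orders is also easily checked, since each application of $z\,d/dz$ raises each pole order by at most one, giving denominator $\prod_j(1-zx_j)^{k}$ before evaluation. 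Two points worth making explicit in a final write-up: evaluation at $z=1$ and the specialization argument are legitimate because $|x_j|=p^{-\sigma_j}<1$ when $\sigma_j>0$, so everything is an identity of convergent power series on the polydisc; and your $\varepsilon$-shift for the weighted series (\ref{toth col}) is exactly the paper's ``direct corollary,'' just spelled out, using openness of the region $\{\sigma_j>0,\ \sigma_j+\sigma_l>1\}$.
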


\begin{proof}
The function $n\mapsto\tau_k(n)$ is multiplicative and $\tau_k(p^\nu)=\binom{\nu+k-1}{k-1}$ for every prime power $p^\nu(\nu\geqslant0)$. The function $(n_1,\cdots,n_r)\mapsto\tau_k(n_1\cdots n_r)$ is multiplicative, viewed as a function of $r$ variables. Therefore its multiple Dirichlet series can be expanded into an Euler product. We obtain
\begin{equation*}
\begin{aligned}
  \sum_{n_1,\cdots,n_r=1}^{\infty}\frac{\tau_{k}(n_1\cdots n_r)}{{n_1}^{s_1}\cdots{n_r}^{s_r}}&=\prod_{p}\sum_{\nu_1\cdots\nu_r=0}^{\infty}\frac{\tau_{k}(p^{\nu_1+\cdots+\nu_r})}{p^{\nu_1s_1+\cdots+\nu_rs_r}}\\
  &=\zeta^k(s_1)\cdots\zeta^k(s_r)F_{r,k}(s_1,\cdots,s_r),
\end{aligned}
\end{equation*}
where
\begin{equation*}
\begin{aligned}
  F_{r,k}(s_1,\cdots,s_r)&=\prod_{p}\left(1-\frac{1}{p^{s_1}}\right)^k\cdots\left(1-\frac{1}{p^{s_r}}\right)^k
  \sum_{\nu_1\cdots\nu_r=0}^{\infty}\frac{\tau_k(p^{\nu_1+\cdots+\nu_r})}{p^{\nu_1s_1+\cdots+\nu_rs_r}}\\
  &=\prod_{p}\left(1-\frac{1}{p^{s_1}}\right)^k\cdots\left(1-\frac{1}{p^{s_r}}\right)^k\\
  &\times\left(1-r+\sum_{\nu_1=0}^{\infty}\frac{\tau_k(p^{\nu_1})}{p^{\nu_1s_1}}+\cdots+\sum_{\nu_1=0}^{\infty}\frac{\tau_k(p^{\nu_r})}{p^{\nu_rs_r}}
  +\sum_{\substack{\nu_1\cdots\nu_r=0\\ \#A(\nu_1,\cdots,\nu_r)\geqslant2}}^{\infty}\frac{\tau_k(p^{\nu_1+\cdots+\nu_r})}{p^{v_1s_1+\cdots+v_rs_r}}\right)\\
  &=\prod_{p}\left(1-\frac{1}{p^{s_1}}\right)^k\cdots\left(1-\frac{1}{p^{s_r}}\right)^k\\
  &\times\left(1-r+\left(1-\frac{1}{p^{s_1}}\right)^{-k}+\cdots+\left(1-\frac{1}{p^{s_r}}\right)^{-k}
  +\sum_{\substack{\nu_1\cdots\nu_r=0\\ \#A(\nu_1,\cdots,\nu_r)\geqslant2}}^{\infty}\frac{\tau_k(p^{\nu_1+\cdots+\nu_r})}{p^{v_1s_1+\cdots+v_rs_r}}\right),
\end{aligned}
\end{equation*}
where $\#A(\nu_1,\cdots,\nu_r):=\{j:1\leqslant j\leqslant r, \nu_j\neq0\}$. And the terms $1/p^{\ell s_j}$ (the case $\nu_t=0$ for all $t\neq j$ and $\nu_j=\ell$) only appear in
\begin{equation*}
  \left(1-\frac{1}{p^{s_1}}\right)^k\cdots\left(1-\frac{1}{p^{s_r}}\right)^k\left(1-r+\left(1-\frac{1}{p^{s_1}}\right)^{-k}+\cdots+\left(1-\frac{1}{p^{s_r}}\right)^{-k}\right).
\end{equation*}
Using Lemma \ref{polynomial} in the case that $t_j=1/p^{s_j} (1\leqslant j\leqslant r)$ we obtain that the coefficient of $1/p^{\ell s_j}$ is zero for every $1\leqslant j\leqslant r$ and $\ell>0$.

Hence if $\Re s_j>0$ and $\Re(s_j+s_l)>1 (1\leqslant j,l\leqslant r)$, then $F_{r,k}(s_1,\cdots,s_r)$ is absolutely convergent. And the convergence of (\ref{toth col}) is a direct corollary.
\end{proof}

\begin{lemma}\label{first deri esti}
Suppose $G_0,m_0$ are fixed real positive numbers, let G(x) be a monotonic function defined on $[a,b]$ such that $|G(x)|\leqslant G_0$ and m(x) be a differentiable real function such that $|m^{'}(x)|\geqslant m_0$ on $[a,b]$, $F(\cdot)=\cos(\cdot)$ or $\sin(\cdot)$ or $e(\cdot)$, then
\begin{equation*}
 \int_a^bG(x)F\left(m(x)\right)dx\ll G_0{m_0}^{-1}.
\end{equation*}
\end{lemma}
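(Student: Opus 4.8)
The plan is to collapse the three choices of $F$ into one oscillatory estimate, then strip off the monotonic weight $G$ with the second mean value theorem for integrals, and finally prove the weightless oscillation bound by a disguised integration by parts. First I would note that $\cos\theta$ and $\sin\theta$ are the real and imaginary parts of $e^{i\theta}$ and that $e(\theta)=e^{2\pi i\theta}$, so in every case it suffices to bound $\int_a^b G(x)e^{i\psi(x)}\,dx$ for a real differentiable phase $\psi$ with $|\psi'(x)|\geqslant\psi_0>0$, where $\psi_0=m_0$ if $F=\cos,\sin$ and $\psi_0=2\pi m_0$ if $F=e$; in all cases a bound $\ll G_0\psi_0^{-1}$ delivers the asserted $\ll G_0 m_0^{-1}$. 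Since $\psi'$ never vanishes, Darboux's theorem forbids it from changing sign, so $\psi$ is strictly monotonic on $[a,b]$; replacing $\psi$ by $-\psi$ if necessary (which only conjugates $e^{i\psi}$) I may assume $\psi'>0$.

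Next I would remove the weight. Applying the second mean value theorem to the real monotonic function $G$ against $\cos\psi$ and against $\sin\psi$ separately yields points $\xi_1,\xi_2\in[a,b]$ with
\[
  \int_a^b G(x)e^{i\psi(x)}\,dx=G(a)\!\int_a^{\xi_1}\!\cos\psi\,dx+G(b)\!\int_{\xi_1}^{b}\!\cos\psi\,dx+i\Bigl(G(a)\!\int_a^{\xi_2}\!\sin\psi\,dx+G(b)\!\int_{\xi_2}^{b}\!\sin\psi\,dx\Bigr).
\]
Using $|G(a)|,|G(b)|\leqslant G_0$ together with $\bigl|\int_c^d\cos\psi\,dx\bigr|,\bigl|\int_c^d\sin\psi\,dx\bigr|\leqslant\bigl|\int_c^d e^{i\psi}\,dx\bigr|$, the whole problem reduces to the oscillation bound
\[
  \int_c^d e^{i\psi(x)}\,dx\ll\psi_0^{-1},\qquad [c,d]\subseteq[a,b],
\]
with implied constant independent of $c,d$.

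For this bound I would write $e^{i\psi}=\frac{1}{\psi'}\bigl(\psi'e^{i\psi}\bigr)$. The factor $\psi'e^{i\psi}$ has the bounded antiderivative $\tfrac1i e^{i\psi}$, so $\int_c^{t}\psi'e^{i\psi}\,dx=\tfrac1i\bigl(e^{i\psi(t)}-e^{i\psi(c)}\bigr)$ has modulus at most $2$ for every $t\in[c,d]$. The remaining factor $1/\psi'$ is positive and monotonic, so a second application of the second mean value theorem (once more to real and imaginary parts) produces $\eta$ with
\[
  \int_c^d e^{i\psi}\,dx=\frac{1}{\psi'(c)}\int_c^{\eta}\psi'e^{i\psi}\,dx+\frac{1}{\psi'(d)}\int_{\eta}^{d}\psi'e^{i\psi}\,dx,
\]
whence $\bigl|\int_c^d e^{i\psi}\,dx\bigr|\leqslant 2/\psi'(c)+2/\psi'(d)\leqslant 4\psi_0^{-1}$. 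Tracing the constants back through the two reductions then gives the lemma.

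The single delicate point — and the step I expect to be the main obstacle — is the monotonicity of $1/\psi'$, i.e. of $\psi'$, which is what legitimizes the final use of the second mean value theorem. This is precisely the additional hypothesis carried by the classical first-derivative test: with only $|\psi'|\geqslant\psi_0$ the oscillation bound can fail, since a phase whose derivative oscillates while staying above $\psi_0$ can force $\int_c^d e^{i\psi}\,dx$ to grow with $d-c$. In every application of this lemma in the present paper the phase is a scalar multiple of $x^{1/3}$, whose derivative is monotonic, so this condition holds automatically and the argument goes through as written.
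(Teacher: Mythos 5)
Your proposal is correct, but be aware that the paper contains no proof of this lemma at all: its ``proof'' is the single line ``See Lemma 2.1 in Ivi\'c,'' so what you have written out is essentially the standard argument sitting behind that citation (it is also how Titchmarsh and Ivi\'c prove it): reduce all three choices of $F$ to a single exponential $e^{i\psi}$, use Darboux to get strict monotonicity of $\psi$, strip the monotone weight $G$ by the second mean value theorem, and then bound $\int_c^d e^{i\psi}\,dx$ by writing $e^{i\psi}=\frac{1}{\psi'}\left(\psi'e^{i\psi}\right)$, noting $\psi'e^{i\psi}$ has the bounded antiderivative $\frac{1}{i}e^{i\psi}$, and applying the second mean value theorem once more to the monotone factor $1/\psi'$. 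Two cosmetic points: applying the mean value theorem to real and imaginary parts produces two intermediate points $\eta_1,\eta_2$ rather than the single $\eta$ you display (this only changes the absolute constant, which is irrelevant for $\ll$), and the theorem is formally stated with the one-sided limits $G(a{+}),G(b{-})$, which are still bounded by $G_0$, so your estimate stands.

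The substantive observation in your last paragraph is right and worth emphasizing: as the lemma is stated in the paper, with only $|m'(x)|\geqslant m_0$ and no monotonicity of $m'$, the conclusion is actually \emph{false} --- defining $\psi$ implicitly by $1/\psi'(x)=\bigl(1+\tfrac12\cos\psi(x)\bigr)/(2\psi_0)$ keeps $\psi'\geqslant\psi_0$ while $\int_c^d e^{i\psi}\,dx$ grows linearly in $d-c$, exactly as you sketch. Ivi\'c's Lemma 2.1 carries the additional hypothesis that $G/m'$ (in your weightless reduction, $m'$ itself) is monotonic, and the paper's restatement has silently dropped it; your extra assumption therefore repairs the statement rather than weakening your proof. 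As you note, this costs nothing in the present paper: every invocation has phase $6\pi\bigl((n_1x/m_r)^{1/3}\pm(n_2x/m_{2r})^{1/3}\bigr)$ on $[T,2T]$, whose derivative is a fixed-sign constant times $x^{-2/3}$, hence monotonic, and the weight $x^{2r-4/3}(\log x)^{\ell_1+\ell_2}$ is monotonic there as well.
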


\begin{proof}
See Lemma 2.1 in Ivi\'{c} \cite{MR0792089}.
\end{proof}

\begin{lemma}\label{T(a,b)}
  Suppose $N$ is a given large real number, $1\leqslant N_1,N_2\leqslant N$ are given real numbers, $a,b$ are integers such that $a,b\ll N$. Define
\begin{equation*}
  T(a,b)=\sum_{\substack{n_1\sim N_1,n_2\sim N_2\\an_1\neq bn_2}}\frac{1}{|an_1-bn_2|}.
\end{equation*}  
Then we have
\begin{equation*}
  T(a,b)\ll(N_1N_2)^{1/2}\log N.
\end{equation*}
\end{lemma}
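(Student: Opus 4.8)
The plan is to reorganise the sum according to the value of the nonzero integer $h=an_1-bn_2$. Writing $r(h)$ for the number of pairs $(n_1,n_2)$ with $n_1\sim N_1$, $n_2\sim N_2$ and $an_1-bn_2=h$, we have
\[
  T(a,b)=\sum_{h\neq0}\frac{r(h)}{|h|}.
\]
Since $a,b\ll N$ and $N_1,N_2\leqslant N$, every contributing $h$ satisfies $|h|\leqslant aN_1+bN_2\ll N^2$, so the effective range of $h$ has only logarithmic size in $N$.

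Next I would estimate $r(h)$ by the elementary theory of linear Diophantine equations. Put $d=\gcd(a,b)$ and write $a=da'$, $b=db'$ with $\gcd(a',b')=1$. The equation $an_1-bn_2=h$ is solvable only when $d\mid h$, and in that case the congruence $a'n_1\equiv h/d\pmod{b'}$ pins $n_1$ down to a single residue class modulo $b'$, while $n_2$ is then determined. Consequently the number of admissible $n_1\sim N_1$ is $\ll1+N_1/b'=1+N_1d/b$, and by the symmetric argument it is also $\ll1+N_2d/a$. Hence, uniformly in $h$,
\[
  r(h)\ll1+\min\!\left(\frac{N_1d}{b},\frac{N_2d}{a}\right).
\]

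Then I would insert this uniform bound and sum the harmonic weights over the multiples of $d$ in the admissible range:
\[
  \sum_{\substack{h\neq0,\ d\mid h\\ |h|\ll N^2}}\frac{1}{|h|}=\frac1d\sum_{0<|h'|\ll N^2/d}\frac{1}{|h'|}\ll\frac{\log N}{d}.
\]
Combining the last two displays gives
\[
  T(a,b)\ll\frac{\log N}{d}+\log N\cdot\min\!\left(\frac{N_1}{b},\frac{N_2}{a}\right).
\]
Because $d\geqslant1$ and $N_1,N_2\geqslant1$, the first term is already $\ll(N_1N_2)^{1/2}\log N$.

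Finally, the crux is to control the minimum. Applying $\min(X,Y)\leqslant(XY)^{1/2}$ with $X=N_1/b$ and $Y=N_2/a$ yields $\min(N_1/b,N_2/a)\leqslant\bigl(N_1N_2/(ab)\bigr)^{1/2}\leqslant(N_1N_2)^{1/2}$, since $a,b\geqslant1$, and the desired estimate follows. The step that requires genuine care is the count $r(h)$: the factor $1+\min(N_1d/b,N_2d/a)$ must be obtained uniformly in $h$ rather than merely on average, and it is precisely the geometric-mean inequality applied to this factor that cancels the dependence on $a$, $b$ and $d$ and produces the clean $(N_1N_2)^{1/2}\log N$ bound.
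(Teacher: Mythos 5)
Your proof is correct, and it is organized differently from the paper's. You decompose the sum by the value $h=an_1-bn_2$, count representations $r(h)$ via the linear Diophantine equation (reducing by $d=\gcd(a,b)$ and using that $n_1$ is confined to one residue class modulo $b/d$), obtain the uniform bound $r(h)\ll 1+\min(N_1d/b,\,N_2d/a)$, sum $1/|h|$ over multiples of $d$, and finish with $\min(X,Y)\leqslant(XY)^{1/2}$. The paper instead fixes $n_1$ and observes that, as $n_2$ varies, the nonzero values of $an_1-bn_2$ are distinct integers lying in a fixed residue class modulo $b$, so the inner harmonic sum is $\ll\log N$; this gives $T(a,b)\ll N_1\log N$, the symmetric argument gives $T(a,b)\ll N_2\log N$, and the geometric mean of these two \emph{complete} bounds yields the claim. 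So both arguments rest on the same two ingredients — an arithmetic-progression/representation count and the geometric-mean trick — but you apply the geometric mean inside a single estimate, while the paper applies it to two symmetric estimates. The paper's route is shorter and needs no gcd bookkeeping; yours is sharper, retaining the explicit savings $\log N\bigl(1/d+(N_1N_2/ab)^{1/2}\bigr)$ (not needed for the lemma), and is in fact more scrupulous than the paper's sketch, which ignores negative values of $\eta=an_1-bn_2$ and treats the residue $c_0$ as a constant although it depends on $n_1$ — harmless slips your formulation avoids. Note that both arguments, like the lemma's intended application with $a=m_{2r}$, $b=m_r$, implicitly take $a,b\geqslant1$, which you use at the final step $\bigl(N_1N_2/(ab)\bigr)^{1/2}\leqslant(N_1N_2)^{1/2}$.
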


\begin{proof}
Let $an_1-bn_2=\eta$, we have $\eta\equiv an_1(mod\hspace{0.1cm}b)$, so we can find a constant $c_0$ such that $1\leqslant c_0<b$, $r=bt+c_0$, where $t$ is an integer such that $0<t<2N^2$, thus
\begin{equation*}
\begin{aligned} 
 T(a,b)&=\sum_{n_1\sim N_1}\sum_{\substack{n_2\sim N_2\\an_1\neq bn_2}}\frac{1}{|an_1-bn_2|}\\
 &\leqslant\sum_{n_1\sim N_1}\left(1+\sum_{1\leqslant t\leqslant2N^2}\frac{1}{bt+c_0}\right)\ll N_1\log N,
\end{aligned}
\end{equation*}
similarly we have $T(a,b)\ll N_2\log N$, thus $T(a,b)\ll (N_1N_2)^{1/2}\log N$.
\end{proof}

\begin{lemma}\label{T(x,y)}
Suppose $x,y$ are large real numbers, $r\geqslant2$ is a fixed integer, $s,w$ are given real numbers such that $0<s<1/2<w<1$, $f_{r,3}$ is defined in Lemma \ref{Toth1} in the case k=3. Let $\mathbf{M}_1$ and $\mathbf{M}_2$ denote the vectors $(m_1,\cdots,m_{r})$ and $(m_{k+1},\cdots,m_{2r})$, $D_1$ and $D_2$ denote $\left(\prod_{j=1}^{r-1}m_j\right)$ and $\left(\prod_{j=r+1}^{2r-1}m_j\right)$, respectively. Let
\begin{equation*}
\begin{aligned}
  &T_{g,r,3}(x,y;s,w)=\sum_{\substack{m_1,\cdots,m_{2r}\leqslant x\\n_1,n_2\leqslant y\\\frac{m_r}{m_{2r}}=\frac{n_1}{n_2}}}
 \frac{f_{r,3}(\mathbf{M}_1)f_{r,3}(\mathbf{M}_2)g(\mathbf{M}_1,\mathbf{M}_2)}{D_1D_2(m_rm_{2r})^{s}}
 \cdot\frac{\tau_3(n_1)\tau_3(n_2)}{(n_1n_2)^{w}},\\
 &T_{g,r,3}(s,w)=\sum_{\substack{\mathbf{M}_1,\mathbf{M}_2\in\mathbb{N}^{r}\\n_1,n_2\in\mathbb{N}\\\frac{m_r}{m_{2r}}=\frac{n_1}{n_2}}}
 \frac{f_{r,3}(\mathbf{M}_1)f_{r,3}(\mathbf{M}_2)g(\mathbf{M}_1,\mathbf{M}_2)}{D_1D_2(m_rm_{2r})^{s}}
 \cdot\frac{\tau_3(n_1)\tau_3(n_2)}{(n_1n_2)^{w}},
\end{aligned}
\end{equation*}

where $g(\mathbf{M}_1,\mathbf{M}_2)$ is any function which satisfies $g(\mathbf{M}_1,\mathbf{M}_2)\ll\left(\prod_{j=1}^{2k}m_j\right)^{\varepsilon}$, then we have

(i) $T_{g,r,3}(s,w)$ is absolutely convergent.

(ii) We have
\begin{equation*}
 T_{g,r,3}(s,w)-T_{g,r,3}(x,y;s,w)\ll x^{-2s+\varepsilon}+y^{1-2w+\varepsilon}.
 \end{equation*}
\end{lemma}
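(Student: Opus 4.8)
The plan is to integrate out the ``free'' coordinates $m_1,\dots,m_{r-1}$ and $m_{r+1},\dots,m_{2r-1}$ using Lemma \ref{Toth1}, then reduce both statements to a four-variable sum over $m_r,m_{2r},n_1,n_2$ coupled by $m_rn_2=m_{2r}n_1$, and finally to exploit $w>1/2$ to sum $n_1,n_2$ and $s>0$ (together with the $\gcd$ structure) to sum $m_r,m_{2r}$. Since $|g(\mathbf M_1,\mathbf M_2)|\ll(\prod_{j=1}^{2r}m_j)^{\varepsilon}$ factorises over the two blocks, after taking absolute values I would introduce, for $m_r=m$,
\[
  \psi(m)=\sum_{m_1,\dots,m_{r-1}\geqslant1}\frac{|f_{r,3}(m_1,\dots,m_{r-1},m)|\,(m_1\cdots m_{r-1}m)^{\varepsilon}}{m_1\cdots m_{r-1}},
\]
and the analogous function for the second block (equal to $\psi$ by the symmetry of $f_{r,3}$). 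Lemma \ref{Toth1}, applied with exponents $(1,\dots,1,\sigma)$ --- which satisfy $\Re s_j>0$ and $\Re(s_j+s_l)>1$ for $j\neq l$ as soon as $\sigma>0$ --- shows that $\sum_m\psi(m)m^{-\sigma}$ converges for every $\sigma>0$. I will use this in two ways: $\sum_m\psi(m)m^{-s}<\infty$ because $s>0$, and, the abscissa of convergence being $\leqslant0$, the dyadic bound $\sum_{m\sim M}\psi(m)\ll M^{\varepsilon}$.

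For the $n$-variables I fix $m_r=m$, $m_{2r}=m'$, put $g=\gcd(m,m')$, $\alpha=m/g$, $\beta=m'/g$; then $m n_2=m'n_1$ forces $n_1=\alpha u$, $n_2=\beta u$ with $u\geqslant1$, and using $\tau_3(n)\ll n^{\varepsilon}$ together with $2w>1$,
\[
  \sum_{\substack{n_1,n_2\geqslant1\\ m n_2=m'n_1}}\frac{\tau_3(n_1)\tau_3(n_2)}{(n_1n_2)^{w}}\ll(\alpha\beta)^{-w+\varepsilon}=\frac{\gcd(m,m')^{2w-2\varepsilon}}{(mm')^{w-\varepsilon}}.
\]
Hence $|T_{g,r,3}(s,w)|$ is dominated by $S:=\sum_{m,m'}\psi(m)\psi(m')\gcd(m,m')^{2w}(mm')^{-s-w+\varepsilon}$. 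For part (i) I bound $\gcd(m,m')^{2w}\leqslant(mm')^{w}$, which decouples the variables and yields $S\ll\big(\sum_m\psi(m)m^{-s+\varepsilon}\big)^{2}<\infty$; this is the required absolute convergence.

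For part (ii) the difference is supported on tuples with some $m_j>x$ or some $n_i>y$, so by the triangle inequality it is at most the sum of the corresponding restricted versions of $S$. For the $n$-tail the condition $\max(n_1,n_2)>y$ reads $u>y/\max(\alpha,\beta)$; truncating the $u$-sum produces the factor $y^{1-2w+\varepsilon}$ times $\max(\alpha,\beta)^{2w-1}(\alpha\beta)^{-w}\ll(\alpha\beta)^{w-1}=\gcd(m,m')^{2-2w}(mm')^{w-1}$, and since $w<1$ the residual $m,m'$-sum converges after $\gcd(m,m')^{2-2w}\leqslant(mm')^{1-w}$, giving the contribution $\ll y^{1-2w+\varepsilon}$. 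For the $m$-tail I decompose dyadically $m\sim M$, $m'\sim M'$ and use $\gcd(m,m')\leqslant\min(M,M')$; summing the larger of $M,M'$ (where $w>s$ and $s+w>0$ fix the directions of the geometric series) gives $\ll(MM')^{\varepsilon}M^{-2s}$ per block. If the truncated coordinate is $m_r$ or $m_{2r}$ I restrict $M\geqslant x$ and sum $\sum_{M\geqslant x}M^{-2s+\varepsilon}\ll x^{-2s+\varepsilon}$; if it is a free coordinate, which sits inside $\psi$, I insert a Rankin weight $m_j^{\delta}$ so that $\sum_{m\sim M}\psi^{>x}(m)\ll x^{-\delta}M^{\eta+\varepsilon}$ by Lemma \ref{Toth1} (the free exponent lowered to $1-\delta$, the exponent on the special coordinate taken to be $\eta$, valid for $\delta<\eta$), whereupon the block bound becomes $x^{-\delta}M^{\eta-2s}$ and $\sum_{M\geqslant1}M^{\eta-2s}$ converges for $\eta<2s$; choosing $\delta<\eta<2s$ gives $\ll x^{-2s+\varepsilon}$ once more.

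The one genuinely delicate point, and the step I expect to be the main obstacle, is obtaining the sharp exponent $x^{-2s}$ rather than the weaker $x^{-s}$ that a careless estimate yields. The weaker bound is what one gets by decoupling $m$ and $m'$ (for instance via $\gcd(m,m')\leqslant\sqrt{mm'}$) before truncating. Retaining the full weight $\gcd(m,m')^{2w}$ coming from the $n$-summation and carrying out the dyadic argument is exactly what converts a single power of the truncation into two, and it is the interplay of the inequalities $w>s$ and $w<1$ that makes the geometric sums close in the correct direction.
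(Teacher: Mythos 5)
Your argument is correct: the parametrization $m_r=g\alpha$, $m_{2r}=g\beta$, $n_1=\alpha u$, $n_2=\beta u$ with $g=\gcd(m_r,m_{2r})$, the use of $2w>1$ to close the $u$-sum, the decoupling $\gcd(m,m')^{2w}\leqslant (mm')^{w}$ for part (i), and the Rankin-weight/dyadic treatment of the tails in part (ii) --- in particular your observation that the weight $\gcd(m,m')^{2w}$ must be kept coupled through the truncation to upgrade $x^{-s}$ to $x^{-2s}$, with the inequalities $w>s$ and $s+w>0$ orienting the geometric sums --- all check out, with only routine $\varepsilon$-bookkeeping left implicit (note your applications of Lemma \ref{Toth1} rely on reading its pair condition as $\Re(s_j+s_l)>1$ for $j\neq l$, which is the reading the paper itself uses, e.g.\ in (\ref{Mrk error})). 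The paper offers no in-house proof of this lemma --- it is dispatched by the single line ``use the same argument of Lemma 2.5 in \cite{GZ}'' --- and the deferred argument is of exactly this standard type, so your proposal is essentially the paper's approach, written out in full.
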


\begin{proof}
Use the same argument of Lemma 2.5 in the first author \cite{GZ}.
\end{proof}

\begin{lemma}\label{pan pan}
Suppose $s=\sigma+it$ is a complex number, $f(n)$ is an arithmetic function with Dirichlet series
\begin{equation*}
  F(s)=\sum_{n=1}^{\infty}\frac{f(n)}{n^{s}},
\end{equation*}
which is convergent for $\sigma>1$. Then for any real number $x\geqslant2$ and fixed real number $c>1$, we have
\begin{equation*}
  \sum_{n\leqslant x}f(n)\left(1-\frac{n}{x}\right)=\frac{1}{2\pi i}\int_{(c)}F(s)\frac{x^s}{s(s+1)}ds.
\end{equation*}
\end{lemma}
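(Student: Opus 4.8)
The plan is to recognise this identity as a smoothed (Cesàro–Riesz) version of Perron's formula, whose proof rests on a single discontinuous-integral evaluation followed by an interchange of summation and integration. The two ingredients are a Mellin-type kernel identity and an application of Fubini's theorem, and the whole argument is driven by the fact that the denominator $s(s+1)$ decays quadratically, which makes every integral in sight absolutely convergent.

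First I would establish the discontinuous integral: for every real $y>0$ and every fixed $c>0$,
\[
\frac{1}{2\pi i}\int_{(c)}\frac{y^s}{s(s+1)}\,ds=
\begin{cases}
1-\dfrac{1}{y}, & y\geqslant1,\\[2mm]
0, & 0<y<1.
\end{cases}
\]
The integrand is $\ll|t|^{-2}$ on the line $\Re s=c$, so the integral converges absolutely and may be evaluated by shifting the contour. For $y\geqslant1$ I would move the line of integration to $\Re s=-R$; the horizontal truncation errors and the integral over the shifted vertical line are $\ll y^{-R}$ and vanish as $R\to\infty$ because $y\geqslant1$, while the only poles crossed are the simple poles at $s=0$ and $s=-1$, whose residues are $y^{0}/(0+1)=1$ and $y^{-1}/(-1)=-1/y$, giving the value $1-1/y$. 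For $0<y<1$ I would instead move the line to the right to $\Re s=+R$, where $y^{R}\to0$ and no poles are encountered, giving $0$.

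Second, I would apply this identity with $y=x/n$ for each integer $n\geqslant1$. Since $x/n\geqslant1$ precisely when $n\leqslant x$, the identity isolates the truncated weight: the integral equals $1-n/x$ for $n\leqslant x$ and $0$ for $n>x$. Multiplying by $f(n)$ and summing over all $n\geqslant1$ therefore reproduces the left-hand side $\sum_{n\leqslant x}f(n)(1-n/x)$. It then remains to interchange the sum over $n$ with the integral, which is justified by absolute convergence: on $\Re s=c>1$ one has $|(x/n)^s|=(x/n)^c$, and
\[
\sum_{n=1}^{\infty}|f(n)|\int_{(c)}\frac{(x/n)^c}{|s(s+1)|}\,|ds|
\ll x^c\Big(\sum_{n=1}^{\infty}\frac{|f(n)|}{n^c}\Big)\int_{(c)}\frac{|ds|}{|s(s+1)|}<\infty,
\]
where the $t$-integral is finite by the quadratic decay of $1/(s(s+1))$ and the Dirichlet series converges absolutely at $\sigma=c>1$. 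Fubini's theorem then permits pulling the sum inside, which assembles $\sum_n f(n)(x/n)^s=x^sF(s)$ under the integral sign and yields the claimed formula.

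The main obstacle — really the only delicate point — is the arc and tail bookkeeping in the discontinuous integral together with the attendant absolute-convergence estimate: one must check that the horizontal segments and the integral over the far vertical line genuinely vanish in the limit, and that the double sum–integral is absolutely convergent so that Fubini applies. This is exactly where the extra factor $(s+1)$ in the denominator, compared with the classical Perron kernel $y^s/s$, earns its keep, upgrading the merely conditional convergence of ordinary Perron's formula to the absolute convergence that makes both the contour shift and the interchange routine.
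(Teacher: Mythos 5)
Your proof is correct, but it is genuinely more self-contained than the paper's: the paper disposes of this lemma in one line by citing Perron's formula (Theorem 5.1 in Karatsuba), whereas you reconstruct the standard argument behind that citation --- the discontinuous-integral evaluation of the kernel $\frac{1}{2\pi i}\int_{(c)}\frac{y^{s}}{s(s+1)}\,ds$ by contour shifting (residues $1$ at $s=0$ and $-1/y$ at $s=-1$ for $y\geqslant1$, a rightward shift giving $0$ for $y<1$), followed by substituting $y=x/n$ and interchanging sum and integral via Fubini. Your residue computations and the limit values are all right, including the boundary case $y=1$, where both sides vanish. Two small points of bookkeeping deserve care in a written-out version. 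First, your claim that the horizontal truncation errors are $\ll y^{-R}$ is loose: on a horizontal segment at height $\pm T$ joining $\Re s=-R$ to $\Re s=c$ the integrand is bounded by $\max(y^{c},y^{-R})\,T^{-2}$ times the segment length, so one should let $T\to\infty$ first (killing the horizontal pieces for fixed $R$) and then $R\to\infty$; the conclusion is unaffected. Second, your Fubini step uses $\sum_{n}|f(n)|n^{-c}<\infty$, i.e.\ \emph{absolute} convergence at $\sigma=c$, while the lemma as stated only hypothesizes convergence for $\sigma>1$; under mere conditional convergence your estimate is only guaranteed for $c>2$, after which one shifts the contour back to any $c>1$ using analyticity of $F$ and polynomial growth on vertical lines against the $|t|^{-2}$ decay of the kernel. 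Since the lemma is applied in the paper only with $f=\tau_{3}$, whose Dirichlet series $\zeta^{3}(s)$ converges absolutely for $\sigma>1$, this distinction is immaterial to the application, and it is plainly what both the paper and Karatsuba intend. What your route buys is independence from the reference and an explicit demonstration of why the extra factor $(s+1)$ makes every convergence issue absolute rather than conditional; what the paper's route buys is brevity.
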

\begin{proof}
  Using Theorem 5.1(the Perron's formula) in Karatsuba \cite{MR1215269} we can finish the proof.
\end{proof}

\begin{lemma}\label{delta3 first moment}
  Let $T\geqslant2$ be a real number and $\Delta_3(\cdot)$ be defined in (\ref{k div aver}), then we have
\begin{equation*}
  \int_{1}^{T}\Delta_3(x)dx\ll T^{7/6+\varepsilon}.
\end{equation*}
\end{lemma}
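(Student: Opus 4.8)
The plan is to trade the first moment for a once-smoothed counting sum and to evaluate that sum by contour integration; the gain over the trivial bound comes entirely from the single extra integration in $x$, which replaces the Perron kernel $x^s/s$ by $x^s/(s(s+1))$ and so produces an integrand decaying like $|t|^{-2}$ on vertical lines. First I would record the elementary identity
\[
  \int_0^x\sum_{n\le t}\tau_3(n)\,dt=x\sum_{n\le x}\tau_3(n)\Bigl(1-\frac nx\Bigr),
\]
obtained by interchanging the finite sum with the integral. Combining it with (\ref{k div aver}) gives
\[
  \int_0^x\Delta_3(t)\,dt=x\sum_{n\le x}\tau_3(n)\Bigl(1-\frac nx\Bigr)-\int_0^x M_3(t)\,dt,
\]
and since $\Delta_3(t)=-M_3(t)$ for $0<t<1$ the integral $\int_0^1\Delta_3$ is $O(1)$, so it suffices to bound $\int_0^x\Delta_3(t)\,dt$ at $x=T$.

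Next I would apply Lemma \ref{pan pan} with $f=\tau_3$ and $F(s)=\zeta^3(s)$ (convergent for $\Re s>1$), giving for fixed $c>1$
\[
  \sum_{n\le x}\tau_3(n)\Bigl(1-\frac nx\Bigr)=\frac{1}{2\pi i}\int_{(c)}\zeta^3(s)\frac{x^s}{s(s+1)}\,ds.
\]
I would then move the line of integration to $\Re s=\tfrac16+\varepsilon$, crossing only the triple pole at $s=1$ (the kernel poles at $s=0,-1$ are not reached since $\tfrac16+\varepsilon>0$), the horizontal segments being negligible because $\zeta^3(s)/(s(s+1))$ decays like $|t|^{-3/4+\varepsilon}$ there. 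Since $M_3(t)=\mathrm{Res}_{s=1}\zeta^3(s)t^s/s$, one integration yields $\int_0^x M_3(t)\,dt=x\,\mathrm{Res}_{s=1}\zeta^3(s)x^s/(s(s+1))$, which is exactly $x$ times the residue just crossed; the two main terms cancel, leaving
\[
  \int_0^x\Delta_3(t)\,dt=\frac{x}{2\pi i}\int_{(1/6+\varepsilon)}\zeta^3(s)\frac{x^s}{s(s+1)}\,ds.
\]

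Finally I would bound the remaining integral by $x^{1/6+\varepsilon}\int_{-\infty}^{\infty}|\zeta(\tfrac16+\varepsilon+it)|^3/|s(s+1)|\,dt$. Feeding in the classical mean-value bounds $\int_0^V|\zeta(\sigma+it)|^2\,dt\ll V^{2-2\sigma+\varepsilon}$ and $\int_0^V|\zeta(\sigma+it)|^4\,dt\ll V^{3-4\sigma+\varepsilon}$ for $0<\sigma<\tfrac12$, Hölder's inequality gives $\int_0^V|\zeta(\sigma+it)|^3\,dt\ll V^{(5-6\sigma)/2+\varepsilon}$, and a dyadic decomposition in $t$ shows that $\int_{(\sigma)}|\zeta(s)|^3/|s(s+1)|\,dt$ converges precisely when $\sigma>\tfrac16$, hence is $O(1)$ at $\sigma=\tfrac16+\varepsilon$. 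This yields $\int_0^T\Delta_3\ll T\cdot T^{1/6+\varepsilon}=T^{7/6+\varepsilon}$, as required. The crux is exactly this convergence: the abscissa $\tfrac16$ is the break-even point between the cube moment of $\zeta$ and the $|t|^{-2}$ kernel, so the fourth-moment input is essential—the convexity bound alone only permits $\sigma>\tfrac13$ and would give the weaker $T^{4/3+\varepsilon}$. Verifying that the triple-pole residue genuinely matches $\int_0^x M_3$ and that the contour shift is licit is then routine.
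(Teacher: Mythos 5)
Your proposal is correct and takes essentially the same route as the paper: both reduce $\int_1^T\Delta_3$ to the once-smoothed sum via Lemma \ref{pan pan} with kernel $x^s/(s(s+1))$, shift the contour to $\Re s=\tfrac16+\varepsilon$ so that the triple-pole residue at $s=1$ cancels $\int M_3$, and control the remaining line integral through the fourth power moment of $\zeta$. The only (cosmetic) difference is where the functional equation enters: the paper applies $\zeta(s)=\chi(s)\zeta(1-s)$ explicitly and bounds $\int t^{-1-\varepsilon}|\zeta(\tfrac56-\varepsilon+it)|^3\,dt$ directly, while you quote the mirrored mean-value bounds at $\sigma=\tfrac16$ and interpolate by H\"older -- the same estimate in equivalent form.
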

\begin{proof}
  Taking $f(n)=\tau_3(n)$ and $x=T$ in Lemma  we have
\begin{equation*}
  \sum_{n\leqslant T}\tau_3(n)\left(T-n\right)=\frac{1}{2\pi i}\int_{(c)}\zeta^3(s)\frac{T^{s+1}}{s(s+1)}ds.
\end{equation*}
Then using (\ref{k div aver}) the left side becomes
\begin{equation*}
  TM_3(T)+T\Delta_3(T)-\sum_{n\leqslant T}n\tau_3(n)=TM_3(T)-\int_{1}^{T}u{M_3}'(u)du+\int_{1}^{T}\Delta_3(u)du,
\end{equation*}
where we use partial summation to get
\begin{equation*}
  \sum_{n\leqslant T}n\tau_3(n)=\int_{1}^{T}u{M_3}'(u)du+T\Delta_3(T)-\int_{1}^{T}\Delta_3(u)du.
\end{equation*}
Thus 
\begin{equation*}
\begin{aligned}
  \int_{1}^{T}\Delta_3(u)du=&\frac{1}{2\pi i}\int_{(c)}\zeta^3(s)\frac{T^{s+1}}{s(s+1)}ds-TM_3(T)+\int_{1}^{T}u{M_3}'(u)du\\
  &=\frac{1}{2\pi i}\int_{(c)}\zeta^3(s)\frac{T^{s+1}}{s(s+1)}ds-\int_{1}^{T}{M_3}(u)du\\
  &=\frac{1}{2\pi i}\int_{(\sigma)}\zeta^3(s)\frac{T^{s+1}}{s(s+1)}ds,
\end{aligned}
\end{equation*}
where $\sigma$ is a real number such that $0<\sigma<1$.

It is well known that $\zeta(s)$ has the functional equation
\begin{equation*}
  \zeta(s)=\chi(s)\zeta(1-s),
\end{equation*}
where $\chi(s)$ satisfies $\chi(s)\ll t^{1/2-\sigma}$ for $s=\sigma+it$ and $0<\sigma<1/2$. Thus for $0<\sigma<1/2$,
\begin{equation*}
\begin{aligned}
  \int_{(\sigma)}\zeta^3(s)\frac{T^{s+1}}{s(s+1)}ds&=\int_{(\sigma)}\chi^3(s)\zeta^3(1-s)\frac{T^{s+1}}{s(s+1)}ds\\
  &\ll T^{\sigma+1}\int_{-i\infty}^{i\infty}t^{-1/2-3\sigma}\zeta^3(1-\sigma+it)dt\\
  &\ll T^{7/6+\varepsilon}\int_{-i\infty}^{i\infty}t^{-1-\varepsilon}\zeta^3(5/6-\varepsilon+it)dt\\
  &\ll T^{7/6+\varepsilon}
\end{aligned} 
\end{equation*}
holds by taking $\sigma=1/6+\varepsilon$, where the convergence of the latter integral can be obtained by integration by parts and the fourth power moment result of $\zeta(s)$:
\begin{equation*}
  \int_{-i\infty}^{i\infty}\zeta^4(5/6-\varepsilon+it)dt\ll1,
\end{equation*}
which can be found in Titchmarsh \cite{MR0046485}. Hence we complete the proof.
\end{proof}

\section{Proof of Theorem \ref{U B for M T} and Expression of \texorpdfstring{${\Delta_{r,k}}(x)$}{}}
\ 
\newline\indent According to Lemma \ref{Toth1},
\begin{equation*}
  \tau_k(n_1\cdots n_r)=\sum_{n_1=m_1d_1,\cdots,n_r=m_rd_r}f_{r,k}(m_1,\cdots,m_r)\tau_k(d_1)\cdots\tau_k(d_r)
\end{equation*}
holds for $n_1,\cdots,n_r\in\mathbb{N}$, where $f_{r,k}$ is defined in Lemma \ref{Toth1}.

Then we deduce by (\ref{k div aver}) that
\begin{equation}\label{dk sum1}
\begin{aligned}
  \sum_{n_1,\cdots,n_r\leqslant x}\tau_k(n_1\cdots n_r)&=\sum_{m_1,\cdots,m_r\leqslant x}f_{r,k}(m_1,\cdots,m_r)\prod_{j=1}^{r}\left(\sum_{d_j\leqslant x/m_j}\tau_k(d_j)\right)\\
  &=\sum_{m_1,\cdots,m_r\leqslant x}f_{r,k}(m_1,\cdots,m_r)\prod_{j=1}^{r}\left(M_k\left(\frac{x}{m_j}\right)+\Delta_k\left(\frac{x}{m_j}\right)\right)\\
  &=\sum_{m_1,\cdots,m_r\leqslant x}f_{r,k}(m_1,\cdots,m_r)\sum_{i=0}^{r}\binom{r}{i}\prod_{j=1}^{r-i}{M_k}\left(\frac{x}{m_j}\right)\prod_{j=1}^{i}{\Delta_k}\left(\frac{x}{m_j}\right).
\end{aligned}
\end{equation}

We evaluate the main term 
\begin{equation*}
\begin{aligned}
M_{r,k}(x):=\sum_{m_1,\cdots,m_r\leqslant x}f_{r,k}(m_1,\cdots,m_r)\prod_{j=1}^{r}{M_k}\left(\frac{x}{m_j}\right).
\end{aligned}
\end{equation*}
Since $M_k(u)=uP_{k-1}(\log u)$ with $P_t(u)$ a polynomial in u of degree $t$, we have
\begin{equation}\label{M prod exp}
\begin{aligned}
\prod_{j=1}^{r}{M_k}\left(\frac{x}{m_j}\right)=\frac{x^r}{m_1\cdots m_r}\sum_{\ell=0}^{r(k-1)}C_{\ell}(\log m_1,\cdots,\log m_r)(\log x)^{\ell},
\end{aligned}
\end{equation}
where
\begin{equation*}
\begin{aligned}
C_{\ell}(\log m_1,\cdots,\log m_r)=\sum_{j_1,\cdots,j_r}c(j_1,\cdots,j_r)(\log m_1)^{j_1}\cdots(\log m_r)^{j_r},
\end{aligned}
\end{equation*}
the sum being over $0<j_t\leqslant k-1(1\leqslant j\leqslant r)$ and $c(j_1,\cdots,j_r)$ are computable constants. Thus we have
\begin{equation}\label{Mrk main}
\begin{aligned}
  M_{r,k}(x)&=x^r\sum_{\ell=0}^{r(k-1)}(\log x)^{\ell}\sum_{m_1,\cdots,m_r\leqslant x}\frac{f_{r,k}(m_1,\cdots,m_r)C_{\ell}(\log m_1,\cdots,\log m_r)}{m_1\cdots m_r}\\
  &=x^r\sum_{\ell=0}^{r(k-1)}d_{r,k,\ell}(\log x)^{\ell}\\
  &\qquad-x^r\sum_{\ell=0}^{r(k-1)}(\log x)^{\ell}{\sum_{m_1,\cdots,m_r}}'\frac{f_{r,k}(m_1,\cdots,m_r)C_{\ell}(\log m_1,\cdots,\log m_r)}{m_1\cdots m_r},
\end{aligned}
\end{equation}
where
\begin{equation*}
\begin{aligned}
d_{r,k,\ell}:=\sum_{m_1,\cdots,m_r=1}^{\infty}\frac{f_{r,k}(m_1,\cdots,m_r)C_{\ell}(\log m_1,\cdots,\log m_r)}{m_1\cdots m_r}
\end{aligned}
\end{equation*}
is convergent by choosing $g(m_1,\cdots,m_r)=C_{\ell}(\log m_1,\cdots,\log m_r)$ in Lemma \ref{Toth1}, and where $\sum'$ means there is at least one $j(1\leqslant j\leqslant r)$ such that $m_j>x$. Without loss of generality, we suppose $m_r>x$.

Since $\log n\ll n^{\varepsilon}$, we obtain
\begin{equation}\label{Mrk error}
\begin{aligned}
  &x^r\sum_{\ell=0}^{r(k-1)}(\log x)^{\ell}{\sum_{m_1,\cdots,m_r}}'\frac{f_{r,k}(m_1,\cdots,m_r)C_{\ell}(\log m_1,\cdots,\log m_r)}{m_1\cdots m_r}\\
  &\ll x^r\sum_{\ell=0}^{r(k-1)}(\log x)^{\ell}\sum_{m_1,\cdots,m_{r-1}=1}^{\infty}\sum_{m_r>x}\frac{|f_{r,k}(m_1,\cdots,m_r)|(m_1\cdots m_r)^{\varepsilon}}{m_1\cdots m_r}\\
  &\ll x^r\sum_{\ell=0}^{r(k-1)}(\log x)^{\ell}
  \sum_{m_1,\cdots,m_{r-1}=1}^{\infty}\sum_{m_r>x}\frac{|f_{r,k}(m_1,\cdots,m_r)|(m_1\cdots m_r)^{\varepsilon}}{m_1\cdots m_{r-1}{m_r}^{3\varepsilon}}\times\frac{1}{{m_r}^{1-3\varepsilon}}\\
  &\ll x^{r+\varepsilon-(1-3\varepsilon)}\sum_{m_1,\cdots,m_r=1}^{\infty}\frac{|f_{r,k}(m_1,\cdots,m_r)|}{(m_1\cdots m_{r-1})^{1-\varepsilon}{m_r}^{2\varepsilon}}\\
  &\ll x^{r-1+\varepsilon},
\end{aligned}
\end{equation}
the convergence of the latter series is obtained by Lemma \ref{Toth1}.

From (\ref{Mrk main}) and (\ref{Mrk error}) we get
\begin{equation}\label{Mrk}
  M_{r,k}(x)=x^r\sum_{\ell=0}^{r(k-1)}d_{r,k,\ell}(\log x)^{\ell}+O(x^{r-1+\varepsilon}).
\end{equation}

By (\ref{k div aver}), (\ref{def alpha beta}) we obtain $M_k(x/m_j)\ll(x/m_j)^{1+\varepsilon}$ and $\Delta_k(x/m_j)\ll(x/m_j)^{\alpha_k+\varepsilon}$ for every $1\leqslant j\leqslant r$, thus the terms in (\ref{dk sum1}) which contains two or more $\Delta_k(x/m_j)$ are
\begin{equation}\label{Deltark error}
\begin{aligned}
  &\ll\sum_{m_1,\cdots,m_r\leqslant x}|f_{r,k}(m_1,\cdots,m_r)|\left(\prod_{j=1}^{r-2}M_k\left(\frac{x}{m_j}\right)\right)
  \left|\Delta_k\left(\frac{x}{m_{r-1}}\right)\right|\left|\Delta_k\left(\frac{x}{m_r}\right)\right|\\
  &\ll x^{r-2+2\alpha_k+\varepsilon}\sum_{m_1,\cdots,m_r\leqslant x}\frac{|f_{r,k}(m_1,\cdots,m_r)|}{m_1\cdots m_{r-2}(m_{r-1}m_r)^{\alpha_k}}\\
  &=x^{r-2+2\alpha_k+\varepsilon}\sum_{m_1,\cdots,m_r\leqslant x}\frac{|f_{r,k}(m_1,\cdots,m_r)|}{m_1\cdots m_{r-2}(m_{r-1}m_r)^{\alpha_k}}
  \times\frac{(m_{r-1}m_r)^{\frac{1}{2}-\alpha_k+\varepsilon}}{(m_{r-1}m_r)^{\frac{1}{2}-\alpha_k+\varepsilon}}\\
  &\ll x^{r-2+2\alpha_k+2(\frac{1}{2}-\alpha_k)+\varepsilon}\sum_{m_1,\cdots,m_r=1}^{\infty}\frac{|f_{r,k}(m_1,\cdots,m_r)|}{m_1\cdots m_{r-2}(m_{r-1}m_r)^{\frac{1}{2}+\varepsilon}}\\
  &\ll x^{r-1+\varepsilon}
\end{aligned}
\end{equation}
by using Lemma \ref{Toth1}.

Above all,
\begin{equation}\label{Deltark exp}
  \Delta_{r,k}(x)={\Delta_{r,k}}^{*}(x)+O(x^{r-1+\varepsilon})
\end{equation}
holds from (\ref{dk sum1}), (\ref{Mrk main}), (\ref{Mrk}) and (\ref{Deltark error}), where
\begin{equation*}
  {\Delta_{r,k}}^{*}(x)=r\sum_{m_1,\cdots,m_r\leqslant x}f_{r,k}(m_1,\cdots,m_r)\prod_{j=1}^{r-1}M_k\left(\frac{x}{m_j}\right)\Delta_k\left(\frac{x}{m_r}\right).
\end{equation*}
Similar to (\ref{Deltark error}) we obtain
\begin{equation*}
\begin{aligned}
  {\Delta_{r,k}}^{*}(x)&\ll\sum_{m_1,\cdots,m_r\leqslant x}|f_{r,k}(m_1,\cdots,m_r)|\left(\prod_{j=1}^{r-1}M_k\left(\frac{x}{m_j}\right)\right)\left|\Delta_k\left(\frac{x}{m_r}\right)\right|\\
  &\ll x^{r-1+\alpha_k+\varepsilon}\sum_{m_1,\cdots,m_r\leqslant x}\frac{|f_{r,k}(m_1,\cdots,m_r)|}{m_1\cdots m_{r-1}{m_r}^{\alpha_k}}\\
  &\ll x^{r-1+\alpha_k+\varepsilon}\sum_{m_1,\cdots,m_r=1}^{\infty}\frac{|f_{r,k}(m_1,\cdots,m_r)|}{m_1\cdots m_{r-1}{m_r}^{\alpha_k}}\\
  &\ll x^{r-1+\alpha_k+\varepsilon},
\end{aligned}
\end{equation*}
by using Lemma \ref{Toth1}. Hence we proof the Theorem \ref{U B for M T}.

\section{Proof of Theorem \ref{mean square dayu4}}

\subsection{Mean Square of \texorpdfstring{${\Delta_{r,k}}^{*}(x)$}{}}
\ 
\newline\indent
Let $T\geqslant2$ be a large real number, and $\mathbf{M}_1$, $\mathbf{M}_2$, $D_1$ and $D_2$ be defined in Lemma \ref{T(x,y)}. 

For $k\geqslant4$, 
\begin{equation*}
\begin{aligned}
  \int_{T}^{2T}({\Delta_{r,k}}^{*}(x))^2dx&\ll\int_{T}^{2T}\sum_{m_1,\cdots,m_{2r}\leqslant2T}|f_{r,k}(\mathbf{M}_1)f_{r,k}(\mathbf{M}_2)|
  \prod_{j=1}^{r-1}M_k\left(\frac{x}{m_j}\right)\prod_{j=r+1}^{2r-1}M_k\left(\frac{x}{m_j}\right)\\
  &\qquad\times\left|\Delta_k\left(\frac{x}{m_r}\right)\Delta_k\left(\frac{x}{m_{2r}}\right)\right|dx.
\end{aligned}
\end{equation*}
Change the order of integration and summation, and use (\ref{k div aver}) we obtain
\begin{equation*}
\begin{aligned}
  \int_{T}^{2T}({\Delta_{r,k}}^{*}(x))^2dx&\ll\sum_{m_1,\cdots,m_{2r}\leqslant2T}|f_{r,k}(\mathbf{M}_1)f_{r,k}(\mathbf{M}_2)|\int_{T}^{2T}
  \prod_{j=1}^{r-1}M_k\left(\frac{x}{m_j}\right)\prod_{j=r+1}^{2r-1}M_k\left(\frac{x}{m_j}\right)\\
  &\qquad\times\left|\Delta_k\left(\frac{x}{m_r}\right)\Delta_k\left(\frac{x}{m_{2r}}\right)\right|dx\\
  &\ll\sum_{m_1,\cdots,m_{2r}\leqslant2T}\frac{|f_{r,k}(\mathbf{M}_1)f_{r,k}(\mathbf{M}_2)|}{D_1D_2}\int_{T}^{2T}x^{2r-2+\varepsilon}
  \left|\Delta_k\left(\frac{x}{m_r}\right)\Delta_k\left(\frac{x}{m_{2r}}\right)\right|dx\\
  &\ll T^{2r-2+\varepsilon}\sum_{m_1,\cdots,m_{2r}\leqslant2T}\frac{|f_{r,k}(\mathbf{M}_1)f_{r,k}(\mathbf{M}_2)|}{D_1D_2}\int_{T}^{2T}
  \left|\Delta_k\left(\frac{x}{m_r}\right)\Delta_k\left(\frac{x}{m_{2r}}\right)\right|dx.
\end{aligned}
\end{equation*}
By the Cauchy-Schwarz's inequality and (\ref{def alpha beta}) we deduce that
\begin{equation*}
\begin{aligned}
  &\int_{T}^{2T}\left|\Delta_k\left(\frac{x}{m_r}\right)\Delta_k\left(\frac{x}{m_{2r}}\right)\right|dx\\
  \ll&\left(\int_{T}^{2T}{\Delta_k}^2\left(\frac{x}{m_r}\right)dx\right)^{1/2}\left(\int_{T}^{2T}{\Delta_k}^2\left(\frac{x}{m_{2r}}\right)dx\right)^{1/2}\\
  \ll&\left(m_r\int_{\frac{T}{m_r}}^{\frac{2T}{m_r}}{\Delta_k}^2\left(u\right)du\right)^{1/2}\left(m_{2r}\int_{\frac{T}{m_{2r}}}^{\frac{2T}{m_{2r}}}{\Delta_k}^2\left(u\right)du\right)^{1/2}\\
  \ll&\frac{T^{1+2\beta_k+\varepsilon}}{{m_r}^{\beta_k}{m_{2r}}^{\beta_k}}.
\end{aligned}
\end{equation*}
Thus
\begin{equation*}
\begin{aligned}
  \int_{T}^{2T}({\Delta_{r,k}}^{*}(x))^2dx&\ll T^{2r-1+2\beta_k+\varepsilon}
  \sum_{m_1,\cdots,m_{2r}\leqslant2T}\frac{|f_{r,k}(\mathbf{M}_1)f_{r,k}(\mathbf{M}_2)|}{D_1{m_r}^{\beta_k}D_2{m_{2r}}^{\beta_k}}\\
  &\ll T^{2r-1+2\beta_k+\varepsilon}\left(\sum_{m_1,\cdots,m_r=1}^{\infty}\frac{|f_{r,k}(\mathbf{M}_1)|}{D_1{m_r}^{\beta_k}}\right)^2\\
  &\ll T^{2r-1+2\beta_k+\varepsilon},
\end{aligned}
\end{equation*}
which means we complete the proof of Theorem in the case of $k\geqslant4$ by replacing $T$ by $T/2$, $T/2^2$, and so on, and adding up all the results.

\section{Proof of Theorem \ref{mean square 3}}

\subsection{Mean Square of \texorpdfstring{${\Delta_{r,3}}^{*}(x)$}{}}
\ 
\newline\indent
In order to evaluate
\begin{equation*}
  \int_{T}^{2T}({\Delta_{r,3}}^{*}(x))^2dx,
\end{equation*}
we divide the ${\Delta_{r,3}}^{*}(x)$ into three parts, namely
\begin{equation*}
  {\Delta_{r,3}}^{*}(x)=M_1+O(M_2+M_3),
\end{equation*}
where
\begin{equation*}
\begin{aligned}
  &M_1=M_1(x,y):=r\sum_{m_1,\cdots,m_r\leqslant y}f_{r,3}(m_1,\cdots,m_r)\prod_{j=1}^{r-1}M_3\left(\frac{x}{m_j}\right)\Delta_3\left(\frac{x}{m_r}\right),\\
  &M_2=M_2(x,y):=\sum_{\substack{m_1,\cdots,m_r\leqslant x\\m_1>y}}|f_{r,3}(m_1,\cdots,m_r)|\prod_{j=1}^{r-1}M_3\left(\frac{x}{m_j}\right)\left|\Delta_3\left(\frac{x}{m_r}\right)\right|,\\
  &M_3=M_3(x,y):=\sum_{\substack{m_1,\cdots,m_r\leqslant x\\m_r>y}}|f_{r,3}(m_1,\cdots,m_r)|\prod_{j=1}^{r-1}M_3\left(\frac{x}{m_j}\right)\left|\Delta_3\left(\frac{x}{m_r}\right)\right|,
\end{aligned}
\end{equation*}
and $y$ is a parameter such that $T^{\varepsilon}\ll y\ll T$. Thus
\begin{equation}\label{Delta* sq exp}
  \int_{T}^{2T}({\Delta_{r,3}}^{*}(x))^2dx=\int_{T}^{2T}{M_1}^2dx+O\left(\int_{T}^{2T}(M_1M_2+M_1M_3)dx\right)+O\left(\int_{T}^{2T}({M_2}^2+{M_3}^2)dx\right).
\end{equation}

Let $\mathbf{M}_1,\mathbf{M}_2,D_1,D_2$ be defined in Lemma \ref{T(x,y)}, then change the order of summation and integration, by (\ref{k div aver}) we obtain
\begin{equation*}
\begin{aligned}
  \int_{T}^{2T}{M_3}^2dx=&\int_{T}^{2T}\sum_{\substack{m_1,\cdots,m_{2r}\leqslant2T\\m_r,m_{2r}>y}}|f_{r,3}(\mathbf{M}_1)f_{r,3}(\mathbf{M}_2)|
  \prod_{j=1}^{r-1}M_3\left(\frac{x}{m_j}\right)\prod_{j=r+1}^{2r-1}M_3\left(\frac{x}{m_j}\right)\\
  &\times\left|\Delta_3\left(\frac{x}{m_r}\right)\Delta_3\left(\frac{x}{m_{2r}}\right)\right|dx\\
  =&\sum_{\substack{m_1,\cdots,m_{2r}\leqslant2T\\m_r,m_{2r}>y}}|f_{r,3}(\mathbf{M}_1)f_{r,3}(\mathbf{M}_2)|
  \int_{T}^{2T}\prod_{j=1}^{r-1}M_3\left(\frac{x}{m_j}\right)\prod_{j=r+1}^{2r-1}M_3\left(\frac{x}{m_j}\right)\\
  &\times\left|\Delta_3\left(\frac{x}{m_r}\right)\Delta_3\left(\frac{x}{m_{2r}}\right)\right|dx\\
  \ll&\sum_{\substack{m_1,\cdots,m_{2r}\leqslant2T\\m_r,m_{2r}>y}}\frac{|f_{r,3}(\mathbf{M}_1)f_{r,3}(\mathbf{M}_2)|}{D_1D_2}
  \int_{T}^{2T}x^{2r-2+\varepsilon}\left|\Delta_3\left(\frac{x}{m_r}\right)\Delta_3\left(\frac{x}{m_{2r}}\right)\right|dx\\
  \ll&T^{2r-2+\varepsilon}\sum_{\substack{m_1,\cdots,m_{2r}\leqslant2T\\m_r,m_{2r}>y}}\frac{|f_{r,3}(\mathbf{M}_1)f_{r,3}(\mathbf{M}_2)|}{D_1D_2}
  \int_{T}^{2T}\left|\Delta_3\left(\frac{x}{m_r}\right)\Delta_3\left(\frac{x}{m_{2r}}\right)\right|dx,
\end{aligned}
\end{equation*}
using Cauchy-Schwarz's inequality and (\ref{def alpha beta}) we deduce that
\begin{equation*}
\begin{aligned}
  \int_{T}^{2T}\left|\Delta_3\left(\frac{x}{m_r}\right)\Delta_3\left(\frac{x}{m_{2r}}\right)\right|dx
  &\ll\left(\int_{T}^{2T}{\Delta_3}^2\left(\frac{x}{m_r}\right)dx\right)^{1/2}\left(\int_{T}^{2T}{\Delta_3}^2\left(\frac{x}{m_{2r}}\right)dx\right)^{1/2}\\
  &\ll\left(m_r\int_{\frac{T}{m_r}}^{\frac{2T}{m_r}}{\Delta_3}^2\left(u\right)du\right)^{1/2}\left(m_{2r}\int_{\frac{T}{m_{2r}}}^{\frac{2T}{m_{2r}}}{\Delta_3}^2\left(u\right)du\right)^{1/2}\\
  &\ll\frac{T^{5/3+\varepsilon}}{{m_r}^{1/3}{m_{2r}}^{1/3}},
\end{aligned}
\end{equation*}
then by Lemma \ref{Toth1} we have
\begin{equation}\label{M_3 mean square}
\begin{aligned}
  \int_{T}^{2T}{M_3}^2dx&\ll T^{2r-1/3+\varepsilon}\left(\sum_{\substack{m_1,\cdots,m_{r}\leqslant2T\\m_r>y}}\frac{|f_{r,3}(\mathbf{M}_1)|}{D_1{m_r}^{1/3}}\right)^2\\
  &\ll T^{2r-1/3+\varepsilon}\left(\sum_{\substack{m_1,\cdots,m_{r}\leqslant2T\\m_r>y}}
  \frac{|f_{r,3}(\mathbf{M}_1)|}{D_1{m_r}^{\varepsilon}}\times\frac{1}{{m_r}^{1/3-\varepsilon}}\right)^2\\
  &\ll T^{2r-1/3+\varepsilon}y^{-2/3+\varepsilon}\left(\sum_{\substack{m_1,\cdots,m_{r}\leqslant2T\\m_r>y}}
  \frac{|f_{r,3}(\mathbf{M}_1)|}{D_1{m_r}^{\varepsilon}}\right)^2\\
  &\ll T^{2r-1/3+\varepsilon}y^{-2/3}.
\end{aligned}
\end{equation}
Let ${D_1}'=D_1/m_1$, similarly we get
\begin{equation}\label{M_2 mean square}
\begin{aligned}
  \int_{T}^{2T}{M_2}^2dx&\ll T^{2r-1/3+\varepsilon}\left(\sum_{\substack{m_1,\cdots,m_{r}\leqslant2T\\m_1>y}}\frac{|f_{r,3}(\mathbf{M}_1)|}{D_1{m_r}^{1/3}}\right)^2\\
  &\ll T^{2r-1/3+\varepsilon}\left(\sum_{\substack{m_1,\cdots,m_{r}\leqslant2T\\m_1>y}}\frac{|f_{r,3}(\mathbf{M}_1)|}{{m_1}^{2/3+\varepsilon}{D_1}'{m_r}^{1/3}}
  \times\frac{1}{{m_1}^{1/3-\varepsilon}}\right)^2\\
  &\ll T^{2r-1/3+\varepsilon}y^{-2/3+\varepsilon}\left(\sum_{\substack{m_1,\cdots,m_{r}\leqslant2T\\m_1>y}}
  \frac{|f_{r,3}(\mathbf{M}_1)|}{{m_1}^{2/3+\varepsilon}{D_1}'{m_r}^{1/3}}\right)^2\\
  &\ll T^{2r-1/3+\varepsilon}y^{-2/3}.
\end{aligned}
\end{equation}
By (\ref{Delta* sq exp}), (\ref{M_3 mean square}), (\ref{M_2 mean square}) we conclude that
\begin{equation*}
  \int_{T}^{2T}({\Delta_{r,k}}^{*}(x))^2dx=\int_{T}^{2T}{M_1}^2dx+O\left(\int_{T}^{2T}(M_1M_2+M_1M_3)dx\right)+O(T^{2r-1/3+\varepsilon}y^{-2/3}).
\end{equation*}

It remains to evaluate $\int_{T}^{2T}{M_1}^2dx$, and then it will follows that $\int_{T}^{2T}M_1M_2dx$ and  $\int_{T}^{2T}M_1M_3dx$ can be estimated by the Cauchy-Schwarz's inequality.

\subsection{Mean Square of \texorpdfstring{$M_1(x,y)$}{}}
Let $N$ be a parameter such that $1\ll N\ll x/m_r\ll x/y$, then
\begin{equation}\label{M1 chaifen}
\begin{aligned}
  M_1(x,y)&=r\sum_{m_1,\cdots,m_r\leqslant y}f_{r,3}(\mathbf{M_1})\prod_{j=1}^{r-1}M_3\left(\frac{x}{m_j}\right)\Delta_3\left(\frac{x}{m_r}\right)\\
  &:=M_{11}(x,y,N)+O(M_{12}(x,y,N)),
\end{aligned}
\end{equation}
where
\begin{equation*}
\begin{aligned}
  &\qquad M_{11}(x,y,N)\\
  &=r\sum_{m_1,\cdots,m_r\leqslant y}f_{r,3}(\mathbf{M_1})\prod_{j=1}^{r-1}M_3\left(\frac{x}{m_j}\right)\frac{x^{1/3}}{\sqrt{3}\pi m_r^{1/3}}\sum_{n\leqslant N}
  \frac{\tau_3(n)}{n^{2/3}}\cos\left(6\pi\left(\frac{nx}{m_r}\right)^{\frac{1}{3}}\right),
\end{aligned}
\end{equation*}
and
\begin{equation*}
\begin{aligned}
M_{12}(x,y,N)=\sum_{m_1,\cdots,m_r\leqslant y}|f_{r,3}(\mathbf{M}_1)|\prod_{j=1}^{r-1}M_3\left(\frac{x}{m_j}\right)\left|\delta_{32}\left(\frac{x}{m_r},N\right)\right|,
\end{aligned}
\end{equation*}
where $\delta_{32}(\cdot,\cdot)$ is defined in Lemma \ref{delta32 mean square}.
Thus
\begin{equation}\label{int M_1 chaifen}
\begin{aligned}
  &\qquad\int_{T}^{2T}{M_1}^2(x,y)dx\\
  &=\int_{T}^{2T}{M_{11}}^2(x,y,N)dx+O\left(\int_{T}^{2T}(M_{11}(x,y,N)M_{12}(x,y,N)+{M_{12}}^2(x,y,N))dx\right).
\end{aligned}
\end{equation}

We are going to evaluate the mean square of $M_{11}$. Using (\ref{M prod exp}) we obtain
\begin{equation*}
\begin{aligned}
  &\qquad M_{11}^2(x,y,N)\\
  &=\frac{r^2x^{2/3}}{3\pi^2}\sum_{m_1,\cdots,m_{2r}\leqslant y}\frac{f_{r,3}(\mathbf{M}_1)f_{r,3}(\mathbf{M}_2)}{(m_rm_{2r})^{1/3}}
  \prod_{j=1}^{r-1}M_3\left(\frac{x}{m_j}\right)\prod_{j=r+1}^{2r-1}M_3\left(\frac{x}{m_j}\right)
  \sum_{n_1,n_2\leqslant N}\frac{\tau_3(n_1)\tau_3(n_2)}{(n_1n_2)^{2/3}}\\
  &\qquad\times\cos\left(6\pi\left(\frac{n_1x}{m_r}\right)^{1/3}\right)\cos\left(6\pi\left(\frac{n_2x}{m_{2r}}\right)^{1/3}\right)\\
  &=\frac{r^2x^{2r-4/3}}{3\pi^2}\sum_{\ell_1,\ell_2=0}^{2(r-1)}(\log x)^{\ell_1+\ell_2}\\
  &\qquad\times\sum_{m_1,\cdots,m_{2r}\leqslant y}\frac{f_{r,3}(\mathbf{M}_1)f_{r,3}(\mathbf{M}_2)
  C_{\ell_1}(\log m_1,\cdots,\log m_{r-1})C_{\ell_2}(\log m_{m_{r+1}},\cdots,\log m_{2r-1})}{D_1D_2(m_rm_{2r})^{1/3}}\\
  &\qquad\times\sum_{n_1,n_2\leqslant N}\frac{\tau_3(n_1)\tau_3(n_2)}{(n_1n_2)^{2/3}}
  \cos\left(6\pi\left(\frac{n_1x}{m_r}\right)^{1/3}\right)\cos\left(6\pi\left(\frac{n_2x}{m_{2r}}\right)^{1/3}\right).
\end{aligned}
\end{equation*}
We denote $C_{\ell_1}(\log m_1,\cdots,\log m_{r-1})C_{\ell_2}(\log m_{m_{r+1}},\cdots,\log m_{2r-1})$ by $C_{\ell_1,\ell_2}(\mathbf{M}_1,\mathbf{M}_2)$, then using $\cos\alpha\cos\beta=\frac{1}{2}(\cos(\alpha-\beta)+\cos(\alpha+\beta))$ we get
\begin{equation}\label{int 012 def}
  M_{11}^2(x,y,N)=S_0(x,y,N)+S_1(x,y,N)+S_2(x,y,N),
\end{equation}
where
\begin{equation*}
\begin{aligned}
  S_0(x,y,N)&=\frac{r^2x^{2r-4/3}}{6\pi^2}\sum_{\ell_1,\ell_2=0}^{2(r-1)}(\log x)^{\ell_1+\ell_2}\\
  &\qquad\times\sum_{\substack{m_1,\cdots,m_{2r}\leqslant y\\n_1,n_2\leqslant N\\ \frac{n_1}{m_r}=\frac{n_2}{m_{2r}}}}
  \frac{f_{r,3}(\mathbf{M}_1)f_{r,3}(\mathbf{M}_2)C_{\ell_1,\ell_2}(\mathbf{M}_1,\mathbf{M}_2)}
  {D_1D_2(m_rm_{2r})^{1/3}}\cdot\frac{\tau_3(n_1)\tau_3(n_2)}{(n_1n_2)^{2/3}},\\
  S_1(x,y,N)&=\frac{r^2x^{2r-4/3}}{6\pi^2}\sum_{\ell_1,\ell_2=0}^{2(r-1)}(\log x)^{\ell_1+\ell_2}\\
  &\qquad\times\sum_{\substack{m_1,\cdots,m_{2r}\leqslant y\\n_1,n_2\leqslant N\\n_1m_{2r}\neq n_2m_r}}
  \frac{f_{r,3}(\mathbf{M}_1)f_{r,3}(\mathbf{M}_2)C_{\ell_1,\ell_2}(\mathbf{M}_1,\mathbf{M}_2)}
  {D_1D_2(m_rm_{2r})^{1/3}}\cdot\frac{\tau_3(n_1)\tau_3(n_2)}{(n_1n_2)^{2/3}}\\
  &\qquad\times\cos\left(6\pi\left(\left(\frac{n_1x}{m_r}\right)^{1/3}-\left(\frac{n_2x}{m_{2r}}\right)^{1/3}\right)\right),
\end{aligned}
\end{equation*}
and
\begin{equation*}
\begin{aligned}
  S_2(x,y,N)&=\frac{r^2x^{2r-4/3}}{6\pi^2}\sum_{\ell_1,\ell_2=0}^{2(r-1)}(\log x)^{\ell_1+\ell_2}\\
  &\qquad\times\sum_{\substack{m_1,\cdots,m_{2r}\leqslant y\\n_1,n_2\leqslant N}}
  \frac{f_{r,3}(\mathbf{M}_1)f_{r,3}(\mathbf{M}_2)C_{\ell_1,\ell_2}(\mathbf{M}_1,\mathbf{M}_2)}
  {D_1D_2(m_rm_{2r})^{1/3}}\cdot\frac{\tau_3(n_1)\tau_3(n_2)}{(n_1n_2)^{2/3}}\\
  &\qquad\times\cos\left(6\pi\left(\left(\frac{n_1x}{m_r}\right)^{1/3}+\left(\frac{n_2x}{m_{2r}}\right)^{1/3}\right)\right).
\end{aligned}
\end{equation*}

Then it turns to deal with
\begin{equation*}
\begin{aligned}
  \int_0=\int_{T}^{2T}S_0(x,y,N)dx,\qquad \int_1=\int_{T}^{2T}S_1(x,y,N)dx,\qquad \int_2=\int_{T}^{2T}S_2(x,y,N)dx.
\end{aligned}
\end{equation*}

\subsubsection{Evaluation of \texorpdfstring{$\int_0$}{}}
\ 
\newline\indent Since $C_{\ell_1,\ell_2}(\mathbf{M}_1,\mathbf{M}_2)$ satisfies
\begin{equation*}
  C_{\ell_1,\ell_2}(\mathbf{M}_1,\mathbf{M}_2)\ll\left(\prod_{j=1}^{2r}m_j\right)^{\varepsilon},
\end{equation*}
we choose $g(\mathbf{M}_1,\mathbf{M}_2))=g_{\ell_1,\ell_2}(\mathbf{M}_1,\mathbf{M}_2)=C_{\ell_1,\ell_2}(\mathbf{M}_1,\mathbf{M}_2)$, $s=1/3$, $w=2/3$ in Lemma \ref{T(x,y)}, then
\begin{equation*}
\begin{aligned}
  S_0(x,y,N)&=\frac{r^2x^{2r-4/3}}{6\pi^2}\sum_{\ell_1,\ell_2=0}^{2(r-1)}(\log x)^{\ell_1+\ell_2}T_{g,r,3}\left(y,N;\frac{1}{3},\frac{2}{3}\right)\\
  &=\frac{r^2x^{2r-4/3}}{6\pi^2}\sum_{\ell_1,\ell_2=0}^{2(r-1)}(\log x)^{\ell_1+\ell_2}\left(T_{g,r,3}\left(\frac{1}{3},\frac{2}{3}\right)+O(y^{-2/3+\varepsilon}+N^{-1/3+\varepsilon})\right).
\end{aligned}
\end{equation*}
Since $g(\mathbf{M}_1,\mathbf{M}_2)$ is related to $\ell_1,\ell_2$, we denote $T_{g,r,3}\left(\frac{1}{3},\frac{2}{3}\right)$ by $D_{r,3,\ell_1,\ell_2}$, therefore
\begin{equation*}
  S_0(x,y,N)=\frac{r^2x^{2r-4/3}}{6\pi^2}Q_{4r-4}(\log x)+O(x^{2r-4/3+\varepsilon}(y^{-2/3}+N^{-1/3})),
\end{equation*}
where 
\begin{equation*}
  Q_{4r-4}(t)=\sum_{\ell_1,\ell_2=0}^{2(r-1)}D_{r,3,\ell_1,\ell_2}t^{\ell_1+\ell_2}
\end{equation*}
is a polynomial of degree $4r-4$. Then it follows that
\begin{equation}\label{int0 eva}
  \int_0=\frac{r^2}{6\pi^2}\sum_{\ell_1,\ell_2=0}^{2(r-1)}D_{r,3,\ell_1,\ell_2}\int_{T}^{2T}x^{2r-4/3}(\log x)^{\ell_1+\ell_2}dx+O(T^{2r-1/3+\varepsilon}(y^{-2/3}+N^{-1/3})).
\end{equation}

\subsubsection{Estimates of \texorpdfstring{$\int_1$}{} and \texorpdfstring{$\int_2$}{}}
\ 
\newline\indent For $\int_2$, change the order of integration and summation we have
\begin{equation*}
\begin{aligned}
  \int_2&=\frac{r^2}{6\pi^2}\sum_{\ell_1,\ell_2=0}^{2(r-1)}\sum_{\substack{m_1,\cdots,m_{2r}\leqslant y\\n_1,n_2\leqslant N}}
  \frac{f_{r,3}(\mathbf{M}_1)f_{r,3}(\mathbf{M}_2)C_{\ell_1,\ell_2}(\mathbf{M}_1,\mathbf{M}_2)}
  {D_1D_2(m_rm_{2r})^{1/3}}\cdot\frac{\tau_3(n_1)\tau_3(n_2)}{(n_1n_2)^{2/3}}\\
  &\qquad\times\int_{T}^{2T}x^{2r-4/3}(\log x)^{\ell_1+\ell_2}\cos\left(6\pi\left(\left(\frac{n_1x}{m_r}\right)^{1/3}+\left(\frac{n_2x}{m_{2r}}\right)^{1/3}\right)\right)dx.
\end{aligned}
\end{equation*}
Choosing $F(\cdot)=\cos(\cdot)$, $G(x)=x^{2r-4/3}(\log x)^{\ell_1+\ell_2}$ and $m(x)=(n_1x/m_r)^{1/3}+(n_2x/m_{2r})^{1/3}$ in Lemma \ref{first deri esti}, then
\begin{equation*}
\begin{aligned}
  \int_2&\ll\sum_{\ell_1,\ell_2=0}^{2(r-1)}\sum_{\substack{m_1,\cdots,m_{2r}\leqslant y\\n_1,n_2\leqslant N}}
  \frac{|f_{r,3}(\mathbf{M}_1)f_{r,3}(\mathbf{M}_2)C_{\ell_1,\ell_2}(\mathbf{M}_1,\mathbf{M}_2)|}{D_1D_2(m_rm_{2r})^{1/3}}\cdot\frac{\tau_3(n_1)\tau_3(n_2)}{(n_1n_2)^{2/3}}\\
  &\qquad\times T^{2r-4/3+\varepsilon}\cdot\frac{T^{2/3}}{(n_1/m_r)^{1/3}+(n_2/m_{2r})^{1/3}}.
\end{aligned}
\end{equation*}

We use $C_{\ell_1,\ell_2}(\mathbf{M}_1,\mathbf{M}_2)\ll\left(\prod_{j=1}^{2r}m_j\right)^{\varepsilon}\ll y^{\varepsilon}\ll T^{\varepsilon}$ and $a^2+b^2\geqslant2ab$ to get
\begin{equation}\label{int2 est}
\begin{aligned}
  \int_2&\ll T^{2r-2/3+\varepsilon}\sum_{\substack{m_1,\cdots,m_{2r}\leqslant y\\n_1,n_2\leqslant N}}
  \frac{|f_{r,3}(\mathbf{M}_1)f_{r,3}(\mathbf{M}_2)|}{D_1D_2(m_rm_{2r})^{1/3}}\cdot\frac{\tau_3(n_1)\tau_3(n_2)}{(n_1n_2)^{2/3}}\left(\frac{m_rm_{2r}}{n_1n_2}\right)^{1/6}\\
  &\ll T^{2r-2/3+\varepsilon}\sum_{n_1,n_2\leqslant N}\frac{\tau_3(n_1)\tau_3(n_2)}{(n_1n_2)^{5/6}}
  \sum_{m_1,\cdots,m_{2r}\leqslant y}\frac{|f_{r,3}(\mathbf{M}_1)f_{r,3}(\mathbf{M}_2)|}{D_1D_2(m_rm_{2r})^{1/6}}\\
  &\ll T^{2r-2/3+\varepsilon}N^{1/3}\left(\sum_{m_1,\cdots,m_r=1}^{\infty}\frac{|f_{r,3}(\mathbf{M}_1)|}{D_1{m_r}^{1/6}}\right)^2\\
  &\ll T^{2r-2/3+\varepsilon}N^{1/3},
\end{aligned}
\end{equation}
where we use partial summation on $n_1,n_2$ and the convergence of the latter series is obtained by Lemma \ref{Toth1}.

Then we turn to estimate $\int_1$, similar to $\int_2$, we have
\begin{equation}\label{def R_1 R_2}
\begin{aligned}
  \int_1&\ll\sum_{\ell_1,\ell_2=0}^{2(r-1)}\sum_{\substack{m_1,\cdots,m_{2r}\leqslant y\\n_1,n_2\leqslant N\\n_1m_{2r}\neq n_2m_r}}
  \frac{|f_{r,3}(\mathbf{M}_1)f_{r,3}(\mathbf{M}_2)C_{\ell_1,\ell_2}(\mathbf{M}_1,\mathbf{M}_2)|}{D_1D_2(m_rm_{2r})^{1/3}}\cdot\frac{\tau_3(n_1)\tau_3(n_2)}{(n_1n_2)^{2/3}}\\
  &\qquad\times T^{2r-4/3+\varepsilon}\cdot\frac{T^{2/3}}{\left|(n_1/m_r)^{1/3}-(n_2/m_{2r})^{1/3}\right|}\\
  &\ll T^{2r-2/3+\varepsilon}\sum_{m_1,\cdots,m_{2r}\leqslant y}\frac{|f_{r,3}(\mathbf{M}_1)f_{r,3}(\mathbf{M}_2)|}{D_1D_2(m_rm_{2r})^{1/3}}
  \sum_{\substack{n_1,n_2\leqslant N\\n_1m_{2r}\neq n_2m_r}}\frac{\tau_3(n_1)\tau_3(n_2)}{(n_1n_2)^{2/3}\left|(n_1/m_r)^{1/3}-(n_2/m_{2r})^{1/3}\right|}\\
  &:=T^{2r-2/3+\varepsilon}\sum_{m_1,\cdots,m_{2r}\leqslant y}\frac{|f_{r,3}(\mathbf{M}_1)f_{r,3}(\mathbf{M}_2)|}{D_1D_2(m_rm_{2r})^{1/3}}(R_1+R_2),
\end{aligned}
\end{equation}
where
\begin{equation*}
\begin{aligned}
  R_1&=\sum_{\substack{n_1,n_2\leqslant N\\n_1m_{2r}\neq n_2m_r\\
  \left|\left(\frac{n_1}{m_r}\right)^{1/3}-\left(\frac{n_2}{m_{2r}}\right)^{1/3}\right|<\frac{1}{10}\left(\frac{n_1n_2}{m_rm_{2r}}\right)^{1/6}}}
  \frac{\tau_3(n_1)\tau_3(n_2)}{(n_1n_2)^{2/3}\left|(n_1/m_r)^{1/3}-(n_2/m_{2r})^{1/3}\right|}\\
  R_2&=\sum_{\substack{n_1,n_2\leqslant N\\n_1m_{2r}\neq n_2m_r\\
  \left|\left(\frac{n_1}{m_r}\right)^{1/3}-\left(\frac{n_2}{m_{2r}}\right)^{1/3}\right|\geqslant\frac{1}{10}\left(\frac{n_1n_2}{m_rm_{2r}}\right)^{1/6}}}
  \frac{\tau_3(n_1)\tau_3(n_2)}{(n_1n_2)^{2/3}\left|(n_1/m_r)^{1/3}-(n_2/m_{2r})^{1/3}\right|}.\\
\end{aligned}
\end{equation*}
Then we have
\begin{equation}\label{R_2}
  R_2\ll\sum_{n_1,n_2\leqslant N}\frac{\tau_3(n_1)\tau_3(n_2)}{(n_1n_2)^{2/3}}\cdot\left(\frac{m_rm_{2r}}{n_1n_2}\right)^{1/6}\ll(m_rm_{2r})^{1/6}N^{1/3+\varepsilon}
\end{equation}
by using partial summation.

For $R_1$, using the Lagrange's Mean Value Theorem, for $r_1,r_2\in\mathbb{R}$, 
\begin{equation*}
|{r_1}^{1/3}-{r_2}^{1/3}|\asymp(r_1r_2)^{-1/3}|r_1-r_2|
\end{equation*}
holds when $r_1\asymp r_2$. Thus choosing $r_1=n_1/m_r$, $r_2=n_2/m_{2r}$ we get
\begin{equation*}
\begin{aligned} 
  R_1&\ll\sum_{\substack{n_1,n_2\leqslant N\\n_1m_{2r}\neq n_2m_r}}\frac{\tau_3(n_1)\tau_3(n_2)}{(n_1n_2)^{2/3}\left|n_1/m_r-n_2/m_{2r}\right|}\cdot\left(\frac{m_rm_{2r}}{n_1n_2}\right)^{-1/3}\\
  &=(m_rm_{2r})^{2/3}\sum_{\substack{n_1,n_2\leqslant N\\n_1m_{2r}\neq n_2m_r}}\frac{\tau_3(n_1)\tau_3(n_2)}{(n_1n_2)^{1/3}\left|n_1m_{2r}-n_2m_r\right|}.
\end{aligned}
\end{equation*}
Let $N_1,N_2$ satisfy $1\leqslant N_1,N_2\leqslant N$. Then using $\tau_3(n)\ll n^{\varepsilon}$ we deduce that
\begin{equation}\label{R_1}
\begin{aligned}
  R_1&\ll(m_rm_{2r})^{2/3}\log^2N\sum_{\substack{n_1\sim N_1,n_2\sim N_2\\n_1m_{2r}\neq n_2m_r}}\frac{\tau_3(n_1)\tau_3(n_2)}{(n_1n_2)^{1/3}\left|n_1m_{2r}-n_2m_r\right|}\\
  &\ll\frac{(m_rm_{2r})^{2/3}}{(N_1N_2)^{1/3}}N^{\varepsilon}\sum_{\substack{n_1\sim N_1,n_2\sim N_2\\n_1m_{2r}\neq n_2m_r}}
  \frac{1}{\left|n_1m_{2r}-n_2m_r\right|}\\
  &\ll\frac{(m_rm_{2r})^{2/3}}{(N_1N_2)^{1/3}}N^{\varepsilon}\cdot(N_1N_2)^{1/2}\log N\\
  &\ll(m_rm_{2r})^{2/3}N^{1/3+\varepsilon},
\end{aligned}
\end{equation}
where we use Lemma \ref{T(a,b)} in the case that $a=m_{2r},b=m_r$.

Therefore we conclude that
\begin{equation*}
  R_1+R_2\ll(m_rm_{2r})^{2/3}N^{1/3+\varepsilon}
\end{equation*}
by (\ref{R_2}) and (\ref{R_1}). Thus we obtain by (\ref{def R_1 R_2}) and Lemma \ref{Toth1} that
\begin{equation}\label{int1 est}
\begin{aligned}
  \int_1&\ll T^{2r-2/3+\varepsilon}N^{1/3}\sum_{m_1,\cdots,m_{2r}\leqslant y}\frac{|f_{r,3}(\mathbf{M}_1)f_{r,3}(\mathbf{M}_2)|}{D_1D_2(m_rm_{2r})^{1/3}}\cdot(m_rm_{2r})^{2/3}\\
  &\ll T^{2r-2/3+\varepsilon}N^{1/3}\sum_{m_1,\cdots,m_{2r}\leqslant y}\frac{|f_{r,3}(\mathbf{M}_1)f_{r,3}(\mathbf{M}_2)|(m_rm_{2r})^{1/3+\varepsilon}}{D_1D_2(m_rm_{2r})^{\varepsilon}}\\
  &\ll T^{2r-2/3+\varepsilon}N^{1/3}y^{2/3}\sum_{m_1,\cdots,m_{2r}\leqslant y}\frac{|f_{r,3}(\mathbf{M}_1)f_{r,3}(\mathbf{M}_2)|}{D_1D_2(m_rm_{2r})^{\varepsilon}}\\
  &\ll T^{2r-2/3+\varepsilon}N^{1/3}y^{2/3}\left(\sum_{m_1,\cdots,m_r=1}^{\infty}\frac{|f_{r,3}(\mathbf{M}_1)|}{D_1{m_r}^{\varepsilon}}\right)^2\\
  &\ll T^{2r-2/3+\varepsilon}N^{1/3}y^{2/3}.
\end{aligned}
\end{equation}

Above all, combining (\ref{int 012 def}), (\ref{int0 eva}), (\ref{int2 est}), (\ref{int1 est}) and taking $y=N^{1/2}$ we obtain
\begin{equation}\label{M11 mean square}
\begin{aligned}
  \int_{T}^{2T}{M_{11}}^2(x,y,N)dx=&\frac{r^2}{6\pi^2}\sum_{\ell_1,\ell_2=0}^{2(r-1)}D_{r,3,\ell_1,\ell_2}\int_{T}^{2T}x^{2r-4/3}(\log x)^{\ell_1+\ell_2}dx\\
  &\qquad+O(T^{2r-1/3+\varepsilon}N^{-1/3})+O(T^{2r-2/3+\varepsilon}N^{2/3}).
\end{aligned}
\end{equation}

\subsubsection{Other terms in (\ref{int M_1 chaifen})}
\ 
\newline\indent From (\ref{k div aver}) and (\ref{M1 chaifen}) we get
\begin{equation*}
\begin{aligned}
  &\qquad\int_{T}^{2T}{M_{12}}^2(x,y,N)dx\\
  &\ll\sum_{m_1,\cdots,m_{2r}\leqslant y}\frac{|f_{r,3}(\mathbf{M}_1)f_{r,3}(\mathbf{M}_2)|}{D_1D_2}
  \int_{T}^{2T}x^{2r-2+\varepsilon}\left|\delta_{32}\left(\frac{x}{m_r},N\right)\delta_{32}\left(\frac{x}{m_{2r}},N\right)\right|dx\\
  &\ll T^{2r-2+\varepsilon}\sum_{m_1,\cdots,m_{2r}\leqslant y}\frac{|f_{r,3}(\mathbf{M}_1)f_{r,3}(\mathbf{M}_2)|}{D_1D_2}
  \int_{T}^{2T}\left|\delta_{32}\left(\frac{x}{m_r},N\right)\delta_{32}\left(\frac{x}{m_{2r}},N\right)\right|dx.
\end{aligned}
\end{equation*}
Suppose that $N\ll (T/y)^{2/3}$, using the Cauchy-Schwarz's inequality and Lemma \ref{delta32 mean square} we have
\begin{equation*}
\begin{aligned}
  &\qquad\int_{T}^{2T}\left|\delta_{32}\left(\frac{x}{m_r},N\right)\delta_{32}\left(\frac{x}{m_{2r}},N\right)\right|dx\\
  &\ll\left(\int_{T}^{2T}{\delta_{32}}^2\left(\frac{x}{m_r},N\right)dx\right)^{1/2}\left(\int_{T}^{2T}{\delta_{32}}^2\left(\frac{x}{m_{2r}},N\right)dx\right)^{1/2}\\
  &\ll\left(m_r\int_{\frac{T}{m_r}}^{\frac{2T}{m_r}}{\delta_{32}}^2\left(u,N\right)du\right)^{1/2}\left(m_{2r}
  \int_{\frac{T}{m_{2r}}}^{\frac{2T}{m_{2r}}}{\delta_{32}}^2\left(u,N\right)du\right)^{1/2}\\
  &\ll\left(\frac{T^{5/3+\varepsilon}N^{-1/3}}{{m_r}^{2/3}}+\frac{T^{14/9+\varepsilon}}{m_r^{5/9}}\right)^{1/2}
  \left(\frac{T^{5/3+\varepsilon}N^{-1/3}}{{m_{2r}}^{2/3}}+\frac{T^{14/9+\varepsilon}}{m_{2r}^{5/9}}\right)^{1/2}\\
  &\ll\frac{T^{5/3+\varepsilon}N^{-1/3}}{(m_rm_{2r})^{1/3}}+\frac{T^{29/18+\varepsilon}N^{-1/6}}{{m_r}^{5/18}{m_{2r}}^{1/3}}
  +\frac{T^{29/18+\varepsilon}N^{-1/6}}{{m_r}^{1/3}{m_{2r}}^{5/18}}+\frac{T^{14/9+\varepsilon}}{(m_rm_{2r})^{5/18}}.
\end{aligned}
\end{equation*}
Thus
\begin{equation}\label{M12 mean square}
\begin{aligned}
   &\qquad\int_{T}^{2T}{M_{12}}^2(x,y,N)dx\\
  &\ll T^{2r-2+\varepsilon}\sum_{m_1,\cdots,m_{2r}\leqslant y}\frac{|f_{r,3}(\mathbf{M}_1)f_{r,3}(\mathbf{M}_2)|}{D_1D_2}\\
  &\qquad\times\left(\frac{T^{5/3+\varepsilon}N^{-1/3}}{(m_rm_{2r})^{1/3}}+\frac{T^{29/18+\varepsilon}N^{-1/6}}{{m_r}^{5/18}{m_{2r}}^{1/3}}
  +\frac{T^{29/18+\varepsilon}N^{-1/6}}{{m_r}^{1/3}{m_{2r}}^{5/18}}+\frac{T^{14/9+\varepsilon}}{(m_rm_{2r})^{5/18}}\right)\\
  &\ll T^{2r-1/3+\varepsilon}N^{-1/3}+T^{2r-7/18+\varepsilon}N^{-1/6}+T^{2r-4/9+\varepsilon},
\end{aligned}
\end{equation}
where the convergence of all the series can be obtained by Lemma \ref{Toth1}. 

Since $T^{2r-7/18+\varepsilon}N^{-1/6}=(T^{2r-1/3+\varepsilon}N^{-1/3})^{1/2}(T^{2r-4/9+\varepsilon})^{1/2}$, the term $T^{2r-7/18+\varepsilon}N^{-1/6}$ is superfluous.

It remains to estimate 
\begin{equation*}
  \int_{T}^{2T}M_{11}(x,y,N)M_{12}(x,y,N)dx.
\end{equation*}
Using (\ref{M11 mean square}), (\ref{M12 mean square}) and the Cauchy-Schwarz's inequality, we deduce that 
\begin{equation}\label{M11M12 int}
\begin{aligned}
  &\qquad\int_{T}^{2T}M_{11}(x,y,N)M_{12}(x,y,N)dx\\
  &\ll\left(\int_{T}^{2T}{M_{11}}^2(x,y,N)dx\right)^{1/2}\left(\int_{T}^{2T}{M_{12}}^2(x,y,N)dx\right)^{1/2}\\
  &\ll T^{r-1/6+\varepsilon}(T^{r-1/6+\varepsilon}N^{-1/6}+T^{r-2/9+\varepsilon})\\
  &=T^{2r-1/3+\varepsilon}N^{-1/6}+T^{2r-7/18+\varepsilon}.
\end{aligned}
\end{equation}
Combining (\ref{int M_1 chaifen}), (\ref{M11 mean square}), (\ref{M12 mean square}) and (\ref{M11M12 int}) we conclude that
\begin{equation}\label{M1 mean square}
\begin{aligned}
  \int_{T}^{2T}{M_1}^2(x,y)dx=&\frac{r^2}{6\pi^2}\sum_{\ell_1,\ell_2=0}^{2(r-1)}D_{r,3,\ell_1,\ell_2}\int_{T}^{2T}x^{2r-4/3}(\log x)^{\ell_1+\ell_2}dx\\
  &\qquad+O(T^{2r-1/3+\varepsilon}N^{-1/6}+T^{2r-2/3+\varepsilon}N^{2/3}+T^{2r-7/18+\varepsilon}).
\end{aligned}
\end{equation}

\subsection{Conclusion}
 Taking $y=N^{1/2}$, using (\ref{M_2 mean square}), (\ref{M1 mean square}) and the Cauchy-Schwarz's inequality we easily have 
\begin{equation}\label{M1M2 int}
\begin{aligned}
  \int_{T}^{2T}M_1(x,y)M_2(x,y)dx&\ll\left(\int_{T}^{2T}{M_1}^2(x,y)dx\right)^{1/2}\left(\int_{T}^{2T}{M_2}^2(x,y)dx\right)^{1/2}\\
  &\ll T^{r-1/6+\varepsilon}\times T^{r-1/6+\varepsilon}N^{-1/6}\\
  &=T^{2r-1/3+\varepsilon}N^{-1/6}.
\end{aligned} 
\end{equation}
And similarly we obtain
\begin{equation}\label{M1M3 int}
\begin{aligned}
  \int_{T}^{2T}M_1(x,y)M_3(x,y)dx&\ll T^{2r-1/3+\varepsilon}N^{-1/6}.
\end{aligned} 
\end{equation}
Above all, choosing $N=T^{1/3}$ which indeed satisfies $N\ll(T/y)^{2/3}=T^{1/2}$,
\begin{equation*}
  \int_{T}^{2T}{\Delta_{r,3}}^2(x)dx=\frac{r^2}{6\pi^2}\sum_{\ell_1,\ell_2=0}^{2(r-1)}D_{r,3,\ell_1,\ell_2}\int_{T}^{2T}x^{2r-4/3}(\log x)^{\ell_1+\ell_2}dx+O(T^{2r-7/18+\varepsilon})
\end{equation*}
holds from (\ref{Deltark exp}), (\ref{Delta* sq exp}), (\ref{M_3 mean square}), (\ref{M_2 mean square}), (\ref{M1 mean square}), (\ref{M1M2 int}) and (\ref{M1M3 int}).
Then replacing $T$ by $T/2$, $T/2^2$ and so on, and adding up all the results, we obtain
\begin{equation*}
\begin{aligned}
  \int_{1}^{T}{\Delta_{r,3}}^2(x)dx&=\frac{r^2}{6\pi^2}\sum_{\ell_1,\ell_2=0}^{2(r-1)}D_{r,3,\ell_1,\ell_2}\int_{1}^{T}x^{2r-4/3}(\log x)^{\ell_1+\ell_2}dx+O(T^{2r-7/18+\varepsilon})\\
  &=\frac{r^2}{6\pi^2}T^{2r-1/3}L_{4r-4}(\log T)+O(T^{2r-7/18+\varepsilon}),
\end{aligned}
\end{equation*}
where we use integration by part several times to get $L_{4r-4}(u)$ is a polynomial in $u$ of degree $4r-4$ denoted by
\begin{equation*}
\begin{aligned}
  L_{4r-4}(u)=\sum_{\ell_1,\ell_2=0}^{2(r-1)}D_{r,3,\ell_1,\ell_2}\sum_{t=0}^{\ell_1+\ell_2}\frac{(-1)^t(\ell_1+\ell_2)!}{(2r-\frac{1}{3})^{t+1}(\ell_1+\ell_2-t)!}u^{\ell_1+\ell_2-t}.
\end{aligned}
\end{equation*}
Hence we have completed the proof of the Theorem.

\section{Proof of Theorem \ref{third moment}}

By (\ref{Deltark exp}) we obtain
\begin{equation*}
\begin{aligned}
  \int_{T}^{2T}|\Delta_{r,3}(x)|^3dx&\ll\int_{T}^{2T}|{\Delta_{r,3}}^{*}(x)|^3dx\\
  &\ll\int_{T}^{2T}\left(\sum_{m_1,\cdots,m_r\leqslant 2T}|f_{r,3}(m_1,\cdots,m_r)|\prod_{j=1}^{r-1}M_3\left(\frac{x}{m_j}\right)\left|\Delta_3\left(\frac{x}{m_r}\right)\right|\right)^3dx\\
  &\ll T^{3r-3+\varepsilon}\int_{T}^{2T}\left(\sum_{m_1,\cdots,m_r\leqslant 2T}
  \frac{|f_{r,3}(m_1,\cdots,m_r)|}{m_1\cdots m_{r-1}}\left|\Delta_3\left(\frac{x}{m_r}\right)\right|\right)^3dx.
\end{aligned}
\end{equation*}

Using the H\"{o}lder's inequality we have
\begin{equation*}
\begin{aligned}
  &\left(\sum_{m_1,\cdots,m_r\leqslant 2T}\frac{|f_{r,3}(m_1,\cdots,m_r)|}{m_1\cdots m_{r-1}}\left|\Delta_3\left(\frac{x}{m_r}\right)\right|\right)^3\\
  \ll&\left(\sum_{m_1,\cdots,m_r\leqslant 2T}\left(\frac{|f_{r,3}(m_1,\cdots,m_r)|}{m_1\cdots m_{r-1}}\right)^{3/2}\right)^{2}
  \left(\sum_{m_1,\cdots,m_r\leqslant 2T}\left|\Delta_3\left(\frac{x}{m_r}\right)\right|^3\right),
\end{aligned}
\end{equation*}
so we obtain
\begin{equation*}
\begin{aligned}
  &\int_{T}^{2T}|\Delta_{r,3}(x)|^3dx\\
  \ll&T^{3r-3+\varepsilon}\int_{T}^{2T}\left(\sum_{m_1,\cdots,m_r\leqslant 2T}\left(\frac{|f_{r,3}(m_1,\cdots,m_r)|}{m_1\cdots m_{r-1}}\right)^{3/2}\right)^{2}
  \left(\sum_{m_1,\cdots,m_r\leqslant 2T}\left|\Delta_3\left(\frac{x}{m_r}\right)\right|^3\right)dx.
\end{aligned}
\end{equation*}
Changing the order of integration and summation we have
\begin{equation*}
\begin{aligned}
  \int_{T}^{2T}|\Delta_{r,3}(x)|^3dx&\ll T^{3r-3+\varepsilon}\left(\sum_{m_1,\cdots,m_r\leqslant 2T}\frac{|f_{r,3}(m_1,\cdots,m_r)|}{m_1\cdots m_{r-1}}\right)^3\int_{T}^{2T}\left|\Delta_3\left(\frac{x}{m_r}\right)\right|^3dx\\
  &\ll T^{3r-3+\varepsilon}\left(\sum_{m_1,\cdots,m_r\leqslant 2T}\frac{|f_{r,3}(m_1,\cdots,m_r)|}{m_1\cdots m_{r-1}}\right)^3
  \left(m_r\int_{\frac{T}{m_r}}^{\frac{2T}{m_r}}\left|\Delta_3\left(u\right)\right|^3du\right)\\
  &\ll T^{3r-1+\varepsilon}\left(\sum_{m_1,\cdots,m_r\leqslant 2T}\frac{|f_{r,3}(m_1,\cdots,m_r)|}{m_1\cdots m_{r-1}{m_r}^{1/3}}\right)^3\\
  &\ll T^{3r-1+\varepsilon},
\end{aligned}
\end{equation*}
where we use (\ref{H-B}) and Lemma \ref{Toth1}. And Theorem \ref{third moment} holds by replacing $T$ by $T/2$, $T/2^2$ and so on, and adding up all the results.

\section{Proof of Theorem \ref{first moment}}

By (\ref{Deltark exp}) we have
\begin{equation*}
\begin{aligned}
  \int_{1}^{T}\Delta_{r,3}(x)dx&=\int_{1}^{T}{\Delta_{r,3}}^{*}(x)dx+O(T^{r+\varepsilon})\\
  &=r\int_{1}^{T}\sum_{m_1,\cdots,m_r\leqslant x}f_{r,3}(m_1,\cdots,m_r)\prod_{j=1}^{r-1}M_3\left(\frac{x}{m_j}\right)\Delta_3\left(\frac{x}{m_r}\right)dx+O(T^{r+\varepsilon})\\
  &:=I_1+I_2+O(T^{r+\varepsilon}),
\end{aligned}
\end{equation*}
where
\begin{equation*}
\begin{aligned}
  I_1&=r\int_{1}^{T}\sum_{m_1,\cdots,m_r\leqslant 2T}f_{r,3}(m_1,\cdots,m_r)\prod_{j=1}^{r-1}M_3\left(\frac{x}{m_j}\right)\Delta_3\left(\frac{x}{m_r}\right)dx,\\
  I_2&=r\int_{1}^{T}\left(\sum_{m_1,\cdots,m_r\leqslant 2T}-\sum_{m_1,\cdots,m_r\leqslant x}\right)
  f_{r,3}(m_1,\cdots,m_r)\prod_{j=1}^{r-1}M_3\left(\frac{x}{m_j}\right)\Delta_3\left(\frac{x}{m_r}\right)dx.
\end{aligned}
\end{equation*}

We are going to estimate $I_2$. Since $m_1,\cdots,m_{r-1}$ are symmetric in $I_2$, we have
\begin{equation*}
  I_2\ll I_{21}+I_{22},
\end{equation*}
where
\begin{equation*}
\begin{aligned}
  I_{21}&=\int_{1}^{T}\sum_{\substack{m_1,\cdots,m_r\leqslant 2T\\m_1>x}}|f_{r,3}(m_1,\cdots,m_r)|\prod_{j=1}^{r-1}M_3\left(\frac{x}{m_j}\right)\left|\Delta_3\left(\frac{x}{m_r}\right)\right|dx,\\
  I_{22}&=\int_{1}^{T}\sum_{\substack{m_1,\cdots,m_r\leqslant 2T\\m_r>x}}|f_{r,3}(m_1,\cdots,m_r)|\prod_{j=1}^{r-1}M_3\left(\frac{x}{m_j}\right)\left|\Delta_3\left(\frac{x}{m_r}\right)\right|dx.
\end{aligned}
\end{equation*}
Changing the order of integration and summation, using (\ref{M prod exp}) and $\Delta_3(x/m_r)\ll(x/m_r)^{\alpha_3+\varepsilon}$ we have
\begin{equation*}
\begin{aligned}
  I_{22}&\ll\sum_{\substack{m_1,\cdots,m_r\leqslant 2T\\m_r>x}}\frac{|f_{r,3}(m_1,\cdots,m_r)|}{m_1\cdots m_{r-1}{m_r}^{\alpha_3}}\int_{1}^{T}x^{r-1+\alpha_3+\varepsilon}dx\\
  &\ll T^{r+\alpha_3+\varepsilon}\sum_{\substack{m_1,\cdots,m_r\leqslant 2T\\m_r>x}}
  \frac{|f_{r,3}(m_1,\cdots,m_r)|}{m_1\cdots m_{r-1}{m_r}^{\varepsilon}}\cdot\frac{1}{{m_r}^{\alpha_3-\varepsilon}}\\
  &\ll T^{r+\varepsilon}\sum_{m_1,\cdots,m_r=1}^{\infty}\frac{|f_{r,3}(m_1,\cdots,m_r)|}{m_1\cdots m_{r-1}{m_r}^{\varepsilon}}\\
  &\ll T^{r+\varepsilon},
\end{aligned}
\end{equation*}
and the convergence of the latter series is obtained by Lemma \ref{Toth1}.

For $I_{21}$, similar to $I_{22}$ and using Lemma \ref{Toth1} we have
\begin{equation*}
\begin{aligned}
  I_{21}&\ll\sum_{\substack{m_1,\cdots,m_r\leqslant 2T\\m_1>x}}\frac{|f_{r,3}(m_1,\cdots,m_r)|}{m_1\cdots m_{r-1}{m_r}^{\alpha_3}}\int_{1}^{T}x^{r-1+\alpha_3+\varepsilon}dx\\
  &\ll T^{r+\alpha_3+\varepsilon}\sum_{\substack{m_1,\cdots,m_r\leqslant 2T\\m_1>x}}
  \frac{|f_{r,3}(m_1,\cdots,m_r)|}{{m_1}^{1-\alpha_3}m_2\cdots m_{r-1}{m_r}^{\alpha_3}}\cdot\frac{1}{{m_1}^{\alpha_3-\varepsilon}}\\
  &\ll T^{r+\varepsilon}\sum_{m_1,\cdots,m_r=1}^{\infty}\frac{|f_{r,3}(m_1,\cdots,m_r)|}{{m_1}^{1-\alpha_3}m_2\cdots m_{r-1}{m_r}^{\alpha_3}}\\
  &\ll T^{r+\varepsilon}.
\end{aligned}
\end{equation*}
Then we turns to estimate $I_1$. By (\ref{Deltark exp}) we have
\begin{equation*}
  \prod_{j=1}^{r-1}M_3\left(\frac{x}{m_j}\right)\ll\frac{x^{r-1+\varepsilon}}{m_1\cdots m_{r-1}},
\end{equation*}
and its derivative satisfies
\begin{equation*}
  \left(\prod_{j=1}^{r-1}M_3\left(\frac{x}{m_j}\right)\right)'\ll\frac{x^{r-2+\varepsilon}}{m_1\cdots m_{r-1}}.
\end{equation*}
For a real number $u\geqslant2$, define 
\begin{equation*}
  S(u)=\int_{1}^{u}\Delta_3(x)dx,
\end{equation*}
and we denote $\prod_{j=1}^{r-1}M_3\left(x/m_j\right)$ by $\mathcal{M}(x;m_1,\cdots,m_{r-1})$, then changing the order of integration and summation we have
\begin{equation*}
\begin{aligned}
  I_1=&r\sum_{m_1,\cdots,m_r\leqslant 2T}f_{r,3}(m_1,\cdots,m_r)\int_{1}^{T}\mathcal{M}(x;m_1,\cdots,m_{r-1})\Delta_3\left(\frac{x}{m_r}\right)dx\\
  =&r\sum_{m_1,\cdots,m_r\leqslant 2T}f_{r,3}(m_1,\cdots,m_r)\\
  &\times\left(S(T)\mathcal{M}(T;m_1,\cdots,m_{r-1})-\int_{1}^{T}S(u)\mathcal{M}'(u;m_1,\cdots,m_{r-1})du\right)\\
  \ll&\sum_{m_1,\cdots,m_r\leqslant 2T}|f_{r,3}(m_1,\cdots,m_r)|\left(\frac{T^{r+1/6+\varepsilon}}{m_1\cdots m_{r-1}}+\int_{1}^{T}|S(u)||\mathcal{M}'(u;m_1,\cdots,m_{r-1})|du\right)\\
  \ll&\sum_{m_1,\cdots,m_r\leqslant 2T}|f_{r,3}(m_1,\cdots,m_r)|\left(\frac{T^{r+1/6+\varepsilon}}{m_1\cdots m_{r-1}}+\frac{T^{7/6+\varepsilon}}{m_1\cdots m_{r-1}}\int_{1}^{T}u^{r-2+\varepsilon}du\right)\\
  \ll&T^{r+1/6+\varepsilon}\sum_{m_1,\cdots,m_r\leqslant 2T}\frac{|f_{r,3}(m_1,\cdots,m_r)|}{m_1\cdots m_{r-1}}\cdot\frac{{m_r}^\varepsilon}{{m_r}^\varepsilon}\\
  \ll&T^{r+1/6+\varepsilon}\sum_{m_1,\cdots,m_r=1}^{\infty}\frac{|f_{r,3}(m_1,\cdots,m_r)|}{m_1\cdots m_{r-1}{m_r}^\varepsilon}\\
  \ll&T^{r+1/6+\varepsilon},
\end{aligned}
\end{equation*}
where we use integration by parts, Lemma \ref{Toth1} and Lemma \ref{delta3 first moment}. Hence we complete the proof.
\subsection{Proof of the Corollary}
Suppose $T^{\varepsilon}\ll H\ll T$ is a parameter, by Theorem \ref{mean square 3} we get
\begin{equation}\label{Deltar3 T H}
\begin{aligned}
  &\int_{T}^{T+H}{\Delta_{r,3}}^2(x)dx\\
  =&\frac{r^2}{6\pi^2}\left((T+H)^{2r-1/3}L_{4r-4}(\log(T+H))-T^{2r-1/3}L_{4r-4}(\log T)\right)+O(T^{2r-7/18+\varepsilon})\\
  \asymp&HT^{2r-4/3}
\end{aligned}
\end{equation}
for $H\gg T^{17/18+\varepsilon}$. Trivally we have 
\begin{equation*}
  \int_{T}^{T+H}{\Delta_{r,3}}^2(x)dx\ll|\Delta_{r,3}(T)|\int_{T}^{T+H}|\Delta_{r,3}(x)|dx.
\end{equation*}
By (\ref{alphak}) and theorem \ref{U B for M T} we have $\Delta_{r,3}(T)\ll T^{r-1+43/96+\varepsilon}$. Thus for $H\gg T^{17/18+\varepsilon}$,
\begin{equation}\label{lower bound}
  \int_{T}^{T+H}|\Delta_{r,3}(x)|dx\gg HT^{r-25/32}.
\end{equation}
Taking $H\gg T^{91/96+\varepsilon}$, combining Theorem \ref{first moment} we obtain
\begin{equation*}
  \int_{T}^{T+H}|\Delta_{r,3}(x)|dx\gg\int_{T}^{T+H}\Delta_{r,3}(x)dx.
\end{equation*}
Hence $\Delta_{r,3}(x)$ has at least one sign change in the interval $[T,T+H]$ for $T^{91/96+\varepsilon}\ll H\ll T$, and has at least $T^{5/96-\varepsilon}$ sign changes in $[T,2T]$.

\section*{Acknowledgement}

The authors would like to appreciate the referee for his/her patience in refereeing this paper.
This work is supported by Beijing Natural Science Foundation (Grant No.1242003), and the National Natural Science Foundation of China (Grant No.11971476).

\bibliographystyle{plain}
\bibliography{reference.bib}

\begin{thebibliography}{10}

\bibitem{MR4421948}
Xiaodong Cao, Yoshio Tanigawa, and Wenguang Zhai.
\newblock On hybrid moments of {$\Delta_2(x)$} and {$\Delta_3(x)$}.
\newblock {\em Ramanujan J.}, 58(2):597--631, 2022.

\bibitem{GZ}
Zhen Guo.
\newblock On mean square of the error term of a multivariable divisor function.
\newblock {\em AIMS Mathematics}, 9(10):29197--29219, 2024.

\bibitem{MR1576550}
G.~H. Hardy.
\newblock On {D}irichlet's {D}ivisor {P}roblem.
\newblock {\em Proc. London Math. Soc. (2)}, 15:1--25, 1916.

\bibitem{MR1575368}
G.~H. Hardy and J.~E. Littlewood.
\newblock The {A}pproximate {F}unctional {E}quation in the {T}heory of the
  {Z}eta-{F}unction, with {A}pplications to the {D}ivisor-{P}roblems of
  {D}irichlet and {P}iltz.
\newblock {\em Proc. London Math. Soc. (2)}, 21:39--74, 1923.

\bibitem{MR1159354}
D.~R. Heath-Brown.
\newblock The distribution and moments of the error term in the {D}irichlet
  divisor problem.
\newblock {\em Acta Arith.}, 60(4):389--415, 1992.

\bibitem{MR0792089}
Aleksandar Ivi\'{c}.
\newblock {\em The {R}iemann zeta-function}.
\newblock A Wiley-Interscience Publication. John Wiley \& Sons, Inc., New York,
  1985.
\newblock The theory of the Riemann zeta-function with applications.

\bibitem{MR1215269}
Anatolij~A. Karatsuba.
\newblock {\em Basic analytic number theory}.
\newblock Springer-Verlag, Berlin, russian edition, 1993.

\bibitem{MR0998378}
Ekkehard Kr\"{a}tzel.
\newblock {\em Lattice points}, volume~33 of {\em Mathematics and its
  Applications (East European Series)}.
\newblock Kluwer Academic Publishers Group, Dordrecht, 1988.

\bibitem{MR0046485}
E.~C. Titchmarsh.
\newblock {\em The {T}heory of the {R}iemann {Z}eta-{F}unction}.
\newblock Oxford, at the Clarendon Press,, 1951.

\bibitem{MR0098718}
Kwang-Chang Tong.
\newblock On divisor problems. {II}, {III}.
\newblock {\em Acta Math. Sinica}, 6:139--152, 515--541, 1956.

\bibitem{MR3841555}
L\'{a}szl\'{o} T\'{o}th and Wenguang Zhai.
\newblock On multivariable averages of divisor functions.
\newblock {\em J. Number Theory}, 192:251--269, 2018.

\bibitem{MR2718848}
Kai-Man Tsang.
\newblock Recent progress on the {D}irichlet divisor problem and the mean
  square of the {R}iemann zeta-function.
\newblock {\em Sci. China Math.}, 53(9):2561--2572, 2010.

\bibitem{MR1580627}
Georges Voronoi.
\newblock Sur un probl\`eme du calcul des fonctions asymptotiques.
\newblock {\em J. Reine Angew. Math.}, 126:241--282, 1903.

\end{thebibliography}
\end{document}